\numberwithin{equation}{section}
\theoremstyle{plain}
\newtheorem{thm}{Theorem}[section]
\newtheorem{cor}[thm]{Corollary}
\newtheorem{lem}[thm]{Lemma}
\newtheorem{prop}[thm]{Proposition}
\newtheorem{defn}[thm]{Definition}
\newtheorem{exm}[thm]{Example}
\theoremstyle{remark}
\newtheorem{rem}[thm]{Remark}
\renewcommand{\mod}{\operatorname{mod}\nolimits}
\newcommand{\Hom}{\operatorname{Hom}\nolimits}
\newcommand{\Ext}{\operatorname{Ext}\nolimits}
\newcommand{\pd}{\operatorname{pd}\nolimits}
\newcommand{\id}{\operatorname{id}\nolimits}
\newcommand{\gl}{\operatorname{gl.dim}\nolimits}
\renewcommand{\sup}{\operatorname{sup}\nolimits}
\newcommand{\Cone}{\operatorname{Cone}\nolimits}
\newcommand{\CoCone}{\operatorname{CoCone}\nolimits}
\newcommand{\M}{\mathcal M}
\newcommand{\B}{\mathcal B}
\newcommand{\uB}{\underline{\B}}
\newcommand{\oB}{\overline{\B}}
\newcommand{\U}{\mathcal U}
\newcommand{\V}{\mathcal V}
\newcommand{\W}{\mathcal W}
\newcommand{\h}{\mathcal H}
\newcommand{\D}{\mathcal D}
\newcommand{\K}{\mathcal K}
\newcommand{\N}{\mathcal N}
\newcommand{\R}{\mathcal R}
\newcommand{\X}{\mathcal X}
\newcommand{\C}{\mathcal C}
\newcommand{\svecv}[2]{\left(\begin{smallmatrix}
      #1 \\
      #2
    \end{smallmatrix}\right)}
\newcommand{\svech}[2]{\left(\begin{smallmatrix}
      #1 & #2
\end{smallmatrix}\right)}
\renewcommand{\emph}{\textit}
\renewcommand{\phi}{\varphi}
\begin{document}

\title{Abelian quotients associated with fully rigid subcategories}

\author{Yu Liu}
\address{School of Mathematics, Southwest Jiaotong University, 610031, Chengdu, Sichuan, People's Republic of China}
\email{liuyu86@swjtu.edu.cn}

\begin{abstract}
In this article, we study the Gorenstein property of abelian quotient categories induced by fully rigid subcategories on an exact category $\B$. We also study when $d$-cluster tilting subcategories become fully rigid. We show that the quotient abelian category induced by such $d$-cluster tilting subcategories are hereditary.
\end{abstract}

\maketitle

\section{Introduction}

Koenig and Zhu \cite{KZ} showed that any ideal quotient of a triangulated category moduloing a cluster tilting subcategory is an abelian category and this abelian quotient category is the module category of a $1$-Gorenstein algebra. It generalized a work of Keller and Reiten \cite{KR} for 2-Calabi-Yau triangulated categories. Nakaoka \cite{N1} introduced the notion of cotorsion pairs in triangulated categories and showed that from any cotorsion pair, one can construct an abelian category, which agrees with Koenig and Zhu's abelian quotient category when the cotorsion pair comes from a cluster tilting subcategory. Beligiannis \cite{B} investigated cotorsion pairs arising from some subcategories called fully rigid, and extended the above result concerning the Gorenstein property.


Both triangulated and exact category are important structures for representation theory. Many results for cotorsion pairs are shown to be similar on triangulated and exact category, it is reasonable to consider the Gorenstein property for the similar structure on exact categories. We briefly review the important properties of exact category. For more details, we refer to \cite{Bu}. Let $\mathcal A$ be an additive category, we call a pair of morphisms $(i,d)$ a \emph{weak short exact sequence} if $i$ is the kernel of $d$ and $d$ is the cokernel of $i$. Let $\mathcal E$ be a class of weak short exact sequences of $\mathcal A$, stable under isomorphisms, direct sums and direct summands. If a weak short exact sequence $(i,d)$ is in $\mathcal E$, we call it a \emph{short exact sequence} and denote it by
$$\xymatrix{X \;\ar@{>->}[r]^i &Y \ar@{->>}[r]^d &Z.}$$
We call $i$ an \emph{inflation} and $d$ a \emph{deflation}. The pair $(\mathcal A,\mathcal E)$ (or simply $\mathcal A$) is said to be an \emph{exact category} if it satisfies the following properties:

\begin{itemize}
\item[(a)] Identity morphisms are inflations and deflations.

\item[(b)] The composition of two inflations (resp. deflations) is an inflation (resp. deflation).

\item[(c)] If $\xymatrix{X \;\ar@{>->}[r]^i &Y \ar@{->>}[r]^d &Z}$ is a short exact sequence, for any morphisms $f : Z' \rightarrow Z$ and $g: X \rightarrow X'$, there are commutative diagrams
$$\xymatrix{
Y' \ar[d]_{f'} \ar@{->>}[r]^{d'} \ar@{}[dr]|{PB} & Z' \ar[d]^f\\
Y \ar@{->>}[r]_d & Z\\
} \quad \xymatrix{
X \ar[d]_{g} \ar@{}[dr]|{PO} \;\ar@{>->}[r]^i & Y \ar[d]^{g'}\\
X' \;\ar@{>->}[r]_{i'} & Y'\\
}$$
where $d'$ is a deflation and $i'$ is an inflation, the left square being a pull-back and the right being a push-out.
\end{itemize}

Throughout this article, let $\B$ be a Krull-Schmidt exact category over a field $k$. Any subcategory we discuss in this article will be full and closed under isomorphisms, we assume $\B$ has enough projectives and enough injectives. Let $\mathcal P$ (resp. $\mathcal I$) be the subcategory of projective (resp. injective) objects. We denote by $\B/\D$ the category whose objects are objects of $\B$ and whose morphisms are elements of $\Hom_{\B}(A,B)/\D(A,B)$ for $A,B\in\B$, where $\D(A,B)$ is the subgroup of $\Hom_{\B}(A,B)$ consisting of morphisms which factor through objects in $\D$. Such category is called the quotient category of $\B$ by $\D$. \\

Cotorsion pair plays an important role in this article, the theory for cotorsion pairs on exact categories are developed in \cite{L,L2} and further in \cite{LN} on a more general structure called extriangulated category. Let's recall its definition on exact category.

\begin{defn}
Let $\U$ and $\V$ be two subcategories of $\B$ which are closed under direct summands. We call $(\U,\V)$ a \emph{cotorsion pair} if it satisfies the following conditions:
\begin{itemize}
\item[(a)] $\Ext^1_\B(\U,\V)=0$.

\item[(b)] For any object $B\in \B$, there exist two short exact sequence
\begin{align*}
V_B\rightarrowtail U_B\twoheadrightarrow B,\quad
B\rightarrowtail V^B\twoheadrightarrow U^B
\end{align*}
satisfying $U_B,U^B\in \U$ and $V_B,V^B\in \V$.
\end{itemize}
\end{defn}

A subcategory $\C$ is called \emph{fully rigid} if $\mathcal P\varsubsetneq \C$ is rigid, it admits a cotorsion pair $(\C,\K)$ and $\B/\K\simeq \mod(\C/\mathcal P)$ (see Definition \ref{fully} and Proposition \ref{eq} for more details, note that cluster tilting subcategory is a special case of fully rigid subcategory). \\
In this article we work on a cotorsion pair $(\C,\K)$ where $\C$ is fully rigid and the abelian quotient $\B/\K$ induced by it. If we consider a cluster tilting subcategory $\M$ on $\B$, we can get an ideal quotient $\B/\M\simeq \mod (\M/\mathcal P)$ which is abelian \cite[Theorem 3.2]{DL}, but this quotient category may not be $1$-Gorenstein anymore (see Example \ref{ex1}).\\ 

To introduce the main theorems, we need some notions.

\begin{defn}
Let $\B'$ and $\B''$ be two subcategories of $\B$.
\begin{itemize}
\item[(a)] Denote by $\CoCone(\B',\B'')$ the subcategory
$$\{ \text{ } X\in \B \text{ }|\text{ } \exists \text{ short exact sequence } X \rightarrowtail B' \twoheadrightarrow B'' \text{, }B'\in \B' \text{ and }B''\in \B'' \text{ } \};$$
\item[(b)] Denote by $\Cone(\B',\B'')$ the subcategory
$$\{ \text{ } X\in \B \text{ }|\text{ } \exists \text{ short exact sequence } B' \rightarrowtail B'' \twoheadrightarrow X \text{, }B'\in \B' \text{ and }B''\in \B'' \text{ } \};$$
\item[(c)] Let $\Omega^0 \B'=\B'$ and $\Omega \B'=\CoCone(\mathcal P,\B')$, then we can define $\Omega^i \B'$ inductively:
$$\Omega^i \B'=\CoCone(\mathcal P,\Omega^{i-1} \B'),$$
we can define a functor $\Omega: \B \to \B/\mathcal P$ according to the defintion above;
\item[(d)] Let $\Sigma^0 \B'=\B'$, $\Sigma \B'=\Cone(\B',\mathcal I)$, then we can define $\Sigma^i \B'$ inductively:
$$\Sigma^i \B'=\Cone(\Sigma^{i-1} \B',\mathcal I),$$
we can define a functor $\Sigma: \B \to \B/\mathcal I$ according to the defintion above.
\end{itemize}
We write an object $D$ in the form $\Omega B$ if it admits a short exact sequence $D \rightarrowtail P \twoheadrightarrow B$ where $P\in \mathcal P$. We write an object $D'$ in the form $\Sigma B'$ if it admits a short exact sequence $B' \rightarrowtail I \twoheadrightarrow D' $ where $I\in \mathcal I$.
\end{defn}

Let $\W$ and $\R$ be subcategories of $\B$, let
$$\W_\R=\{W\in \W \text{ }|\text{ }W \text{ has no direct summand in }\R \}\cup \{\text{zero objects of }\W\},$$
we give a sufficient-necessary condition for $\B/\K$ being $1$-Gorenstein (see Proposition \ref{eq}, Lemma \ref{D}, Proposition \ref{inj}, Theorem \ref{Gro} and Corollary \ref{Gor} for more details).

\begin{thm}
Let $(\C,\K),(\K,\D)$ be cotorsion pairs where $\C$ is fully rigid, then any object $X\in \B_\K$ admits the following short exact sequences:
$$X\rightarrowtail C\twoheadrightarrow C', \quad D'\rightarrowtail D\twoheadrightarrow X$$
where $C,C'\in \C$ and $D,D'\in \D$. $\B/\K$ is $1$-Gorenstein if and only if
\begin{itemize}
\item[(a)] for any object $X\in (\Sigma \D)_{\Omega \C}\cap \B_\K$, which is non-projective injective in $\B/\K$, for the short exact sequence $\xymatrix{X \ar@{ >->}[r]^x &C^1 \ar@{->>}[r] &C^2}$ where $C^1,C^2,\in \C$, morphism $\Omega x$ becomes zero in $\B/\K$.
\item[(b)] for any object $X\in (\Omega \C)_{\Sigma \D}\cap \B_\K$, which is non-injective projective in $\B/\K$, for the short exact sequence $\xymatrix{D_2 \ar@{ >->}[r] &D_1 \ar@{->>}[r]^y &Y}$
where $D_1,D_2,\in \D$, morphism $\Sigma y$ becomes zero in $\B/\K$.
\end{itemize}
\end{thm}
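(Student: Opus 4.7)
The plan is to split the statement into two pieces and argue them separately: first, establish the existence of the two two-term resolutions for every $X\in\B_\K$; second, characterize the 1-Gorenstein property of $\B/\K$ in terms of the vanishing conditions (a) and (b).

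For the existence of $X\rightarrowtail C\twoheadrightarrow C'$ with $C,C'\in\C$, my approach is to combine the two cotorsion pairs $(\C,\K)$ and $(\K,\D)$ via pullback/pushout constructions. Starting from $X\in\B_\K$, I would first apply $(\K,\D)$ to obtain an inflation $X\rightarrowtail D^X\twoheadrightarrow K^X$, then replace the right-hand term using a $\C$-deflation $C_{K^X}\twoheadrightarrow K^X$ furnished by $(\C,\K)$; forming the pullback and iterating the $\C$-approximation gives the desired sequence. Equivalently, one can exploit the equivalence $\B/\K\simeq\mod(\C/\mathcal P)$: every object admits a two-term projective presentation by images of $\C$, which lifts to a short exact sequence in $\B$ thanks to the approximation properties of the cotorsion pairs. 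The hypothesis $X\in\B_\K$ is used to prevent trivial $\K$-summands from absorbing the lift. The sequence $D'\rightarrowtail D\twoheadrightarrow X$ is produced by the symmetric argument using $(\K,\D)$ and $\D$-coresolutions.

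For the characterization, I would invoke the standard criterion that an abelian category is 1-Gorenstein precisely when its injective objects have projective dimension at most $1$ and its projective objects have injective dimension at most $1$. Under $\B/\K\simeq\mod(\C/\mathcal P)$, projectives correspond to the images of objects in $\C$ and injectives correspond to appropriate images of objects in $\Sigma\D$ (the precise identification is the content of Proposition \ref{inj}). Condition (a) then expresses exactly that the $\C$-coresolution $X\rightarrowtail C^1\twoheadrightarrow C^2$ of a non-projective injective $X$ descends to a genuine projective resolution of its image in $\B/\K$: this happens precisely when $X\to C^1$ becomes a monomorphism in the abelian quotient, which I would encode cohomologically as the vanishing of $\Omega x$ modulo $\K$. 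Condition (b) is the dual assertion for injective resolutions of projective objects, witnessed by the sequence $D_2\rightarrowtail D_1\twoheadrightarrow Y$ and the vanishing of $\Sigma y$.

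The principal technical obstacle is showing that the categorical condition ``the induced sequence is short exact in $\B/\K$'' is equivalent to the concrete condition that the syzygy (resp. cosyzygy) morphism vanishes modulo $\K$. Since $\B/\K$ is not the naive ideal quotient but carries its abelian structure through the equivalence with $\mod(\C/\mathcal P)$, monos and epis there must be computed via the representable functor $\Hom_\B(\C,-)$ modulo $\mathcal P$-factorizations. One must verify that $\Omega x$ records exactly the obstruction to the first map being a monomorphism after applying this functor, and that the side conditions selecting $(\Sigma\D)_{\Omega\C}\cap\B_\K$ and $(\Omega\C)_{\Sigma\D}\cap\B_\K$ precisely isolate those objects whose resolutions actually need to witness the dimension bound; the summands excluded by $(-)_{\Omega\C}$, $(-)_{\Sigma\D}$ and $\B_\K$ correspond to cases where the resolution is trivially forced by projectivity, injectivity, or vanishing in $\B/\K$.
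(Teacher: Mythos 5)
Your high-level strategy (identify the projectives and injectives of $\oB$, reduce $1$-Gorensteinness to dimension bounds on them, and translate those bounds into the vanishing conditions) does match the paper's, but the proposal leaves the actual mathematical content unproved and contains one misidentification that would derail the translation. First, the existence statements. The sequence $X\rightarrowtail C\twoheadrightarrow C'$ does not need your pullback construction: since $(\C,\K)$ is fully rigid, $\B_\K=\h_\K$ and $\h=\CoCone(\C,\C)$, so it is immediate. The sequence $D'\rightarrowtail D\twoheadrightarrow X$, however, is \emph{not} obtained by ``the symmetric argument'': a priori $X$ lies in the heart of $(\C,\K)$, not in $\Cone(\D,\D)$, which is the heart-defining class for $(\K,\D)$. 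The paper's Lemma \ref{D} bridges the two cotorsion pairs by building a $3\times 3$ diagram, applying the half-exact functor $H$ to exhibit $X$ as a direct summand of an object of $\Cone(\D,\D)$, and then invoking closure of $\Cone(\D,\D)$ under summands (dual of Lemma \ref{summand2}). This step is a genuine idea missing from your sketch. Second, you write that under $\oB\simeq\mod(\C/\mathcal P)$ the ``projectives correspond to the images of objects in $\C$.'' Since $\C$ is rigid, $\C\subseteq\C^{\bot_1}=\K$, so every object of $\C$ becomes \emph{zero} in $\B/\K$; the projectives of $\oB$ are the images of $\Omega\C$ (Proposition \ref{eq}(b)), which is exactly why condition (a) excludes summands in $\Omega\C$ and condition (b) concerns objects of $(\Omega\C)_{\Sigma\D}$. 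As stated, your identification would make condition (a) unparsable.

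Third, and most seriously, the equivalence ``$\pd_{\oB}(X)\leq 1$ iff $\overline{\Omega x}=0$'' is the entire technical core of the theorem (Theorem \ref{Gro}), and you explicitly defer it as ``the principal technical obstacle'' without resolving it. The paper's proof is not a formal computation with $\Hom_\B(\C,-)$: it constructs, from the syzygy sequences of $X$, $C^1$, $C^2$, a four-term exact sequence $\Omega X\xrightarrow{\overline a}\Omega C^1\xrightarrow{\overline b}\Omega C^2\xrightarrow{\overline c}X\to 0$ in $\oB$ and identifies $-\overline a$ with $\overline{\Omega x}$; the ``if'' direction is then immediate, but the ``only if'' direction requires (i) proving that $\underline b$ is right minimal (which in turn rests on the right minimality of the deflation $C^1\twoheadrightarrow C^2$, using $X\in\h_\K$), and (ii) a direct-summand argument showing that if $\overline a\neq 0$ then its image splits off $\Omega C^1$ against the minimality of $\underline b$, forcing a contradiction. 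Without these two steps the equivalence is an assertion, not a proof, so the proposal as written has a genuine gap at the point where the theorem is actually decided.
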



Cluster tiling subcategories are fully rigid, but this is not always true for $d$-cluster tilting subcategories $\N$ when $d\geq 3$ (see Example \ref{ex2}). By definition a $d$-cluster tilting subcategory $\C$ is rigid and admits a cotorsion pair $(\C,\K)$. 

In section 4 we prove the following theorem.

\begin{thm}
Let $\C$ be a $d$-cluster tilting subcategory of $\B$, $d\geq 3$. 

\begin{itemize}
\item[(a)] $\C$ is fully rigid if and only if $\Omega (\h_{\K})\subseteq \K$;
\item[(b)] If $\C$ is fully rigid, then $\B/\K$ is hereditary.
\end{itemize}
\end{thm}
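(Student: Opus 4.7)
For part (a), my plan is to translate the abstract ``fully rigid'' condition, which by Proposition \ref{eq} amounts to the equivalence $\B/\K\simeq\mod(\C/\mathcal P)$, into the concrete syzygy condition $\Omega(\h_{\K})\subseteq\K$. In the forward direction I fix $H\in\h_{\K}$ together with its syzygy sequence $\Omega H\rightarrowtail P\twoheadrightarrow H$, $P\in\mathcal P$. Under the equivalence $F\colon\B/\K\to\mod(\C/\mathcal P)$ the image $F(P)$ serves as a projective cover of $F(H)$, and comparing against an abstract projective resolution in $\mod(\C/\mathcal P)$ shows that $F(\Omega H)=0$, i.e.\ $\Omega H\in\K$. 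In the reverse direction $\Omega(\h_{\K})\subseteq\K$ directly yields a length-one $\C$-presentation of every heart object modulo $\K$, which, together with the cotorsion pair $(\C,\K)$, verifies the hypothesis of Proposition \ref{eq} and hence realizes $\C$ as fully rigid.

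For part (b), the plan is to show that every object of $\B/\K$ has projective dimension at most $1$. Given $X\in\B$, the cotorsion pair $(\C,\K)$ yields a short exact sequence $V\rightarrowtail C_0\twoheadrightarrow X$ with $V\in\K$, $C_0\in\C$, producing the projective cover $F(C_0)\twoheadrightarrow F(X)$ in $\mod(\C/\mathcal P)$. Applying the cotorsion pair a second time to $V$ gives $V_1\rightarrowtail C_1\twoheadrightarrow V$ with $V_1\in\K$, $C_1\in\C$. Splicing the two sequences provides a two-term $\C$-presentation $F(C_1)\to F(C_0)\twoheadrightarrow F(X)$, and hereditariness reduces to verifying that the left map is a monomorphism in $\B/\K$, or equivalently that the kernel object of $C_1\to C_0$ in $\B$, read through $F$, already lies in $\K$.

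The main obstacle is exactly this kernel computation, and this is where the hypothesis $d\geq 3$ is essential. The $d$-cluster tilting condition yields $\Ext^i_{\B}(\C,\C)=0$ for $1\leq i\leq d-1$, in particular $\Ext^2_{\B}(\C,\C)=0$, so a dimension-shift along the two short exact sequences converts the obstruction to $K\in\K$ into an $\Ext^2$ class among $\C$-objects, which vanishes. Combined with part (a), applied to the heart object representing this kernel, the syzygy lands in $\K$ and the projective resolution terminates after one step. I expect the delicate point to be identifying the kernel as an element of $\h_{\K}$ (so that part (a) is applicable) and carefully matching the dimension-shift argument to the subcategory structure; with $d=2$ only $\Ext^1$-vanishing would be available and the second syzygy would not automatically fall into $\K$, which clarifies why the conclusion is special to $d\geq 3$.
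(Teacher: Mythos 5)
There is a genuine gap, and it runs through both halves of your argument: you have misidentified the projective objects of $\oB=\B/\K$. Since $\C$ is rigid, $\C\subseteq\C^{\bot_1}=\K$, and $\mathcal P\subseteq\C$; hence every object of $\C$, and in particular every projective object of $\B$, becomes \emph{zero} in $\oB$. By Proposition \ref{eq}(b) the projectives of $\oB$ are the objects of $\underline{\Omega\C}$, not those of $\mathcal P$ or $\C$. Consequently, in your forward direction of (a) the object $F(P)$ is zero and cannot be a projective cover of $F(H)\neq 0$; moreover the quotient functor is only half-exact, so applying it to $\Omega H\rightarrowtail P\twoheadrightarrow H$ gives $F(\Omega H)\to 0\to F(H)$, exact only at the middle term, and nothing about $F(\Omega H)$ follows. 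The same error sinks your presentation $F(C_1)\to F(C_0)\twoheadrightarrow F(X)$ in (b): both $F(C_0)$ and $F(C_1)$ vanish. A second, independent problem is that your forward direction of (a) uses no hypothesis beyond fully rigidity, yet Example \ref{ex1} exhibits a fully rigid (cluster tilting, $d=2$) subcategory with an object of projective dimension $3$ in the quotient, so $\Omega(\h_\K)\subseteq\K$ must fail there; the implication genuinely needs $d\geq 3$ (the $2$-rigidity $\Ext^1_\B(\Omega\C,\C)=0$), which the paper uses in Proposition \ref{onlyif} to prove $\Ext^1_\B(X,P_X)=0$, hence that $\Omega X$ is indecomposable, and then to exclude $\Omega X\in\h_\K$ by producing a short exact sequence forcing $X\in\C^{\bot_1}=\K$.

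The remaining content you have skipped is precisely where the work lies. For the ``if'' direction of (a) one must verify the \emph{definition} of fully rigid, namely that every indecomposable object of $\B$ lies in $\K$ or in $\h$; Proposition \ref{eq} assumes fully rigidity rather than characterizing it, so ``verifying its hypothesis'' is circular. The paper does this by an induction along the filtration $\C=\C_1\subseteq\C_2=\h\subseteq\cdots\subseteq\C_d=\B$ furnished by the $d$-cluster tilting property, using Lemma \ref{useful} to handle the cosyzygy-type terms. For (b), once (a) is available the correct argument is short and avoids any $\Ext^2$ computation: from $X\rightarrowtail C^1\twoheadrightarrow C^2$ one obtains the exact sequence $\Omega X\to\Omega C^1\to\Omega C^2\to X\to 0$ in $\oB$; since $\Omega X\in\K$ its image vanishes, leaving $0\to\Omega C^1\to\Omega C^2\to X\to 0$ with $\Omega C^1,\Omega C^2$ projective in $\oB$, whence $\pd_{\oB}X\leq 1$.
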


In the last section we investigate the global dimension of $\B/\K$.

\section{preliminary}


In this section we first show some lemmas which will be used in the later sections. Then we give the definition of fully rigid subcategory and show a useful property of it.


\begin{lem}\label{ind}
Let $X$ be an indecomposable object.
\begin{itemize}
\item[(a)] In the short exact sequence $\xymatrix{\Omega X\ar@{ >->}[r]^q &P \ar@{->>}[r]^p &X}$ where $p$ is a minimal right $\mathcal P$-approximation, if $\Ext^1_\B(X, P)=0$, then $\Omega X$ is indecomposable;
\item[(b)] In the short exact sequence $\xymatrix{ X\ar@{ >->}[r]^i &I \ar@{->>}[r] &\Sigma X}$ where $i$ is a minimal left $\mathcal I$-approximation, if $\Ext^1_\B(I,X)=0$, then $\Sigma X$ is indecomposable.
\end{itemize}
\end{lem}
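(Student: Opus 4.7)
The plan is to prove (a) by contradiction: assume $\Omega X = A \oplus B$ with both summands nonzero, and manufacture an endomorphism of $P$ which the minimality of $p$ forces to be an automorphism, yet whose composition with $q$ annihilates the $B$-summand, contradicting that $q$ is monic. Part (b) will then follow by an essentially dual argument.

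First, applying $\Hom_\B(-,P)$ to the short exact sequence $\Omega X \stackrel{q}{\rightarrowtail} P \stackrel{p}{\twoheadrightarrow} X$ and using $\Ext^1_\B(X,P)=0$ will yield a surjection $q^\ast\colon \Hom_\B(P,P) \twoheadrightarrow \Hom_\B(\Omega X, P)$. Writing $e_A, e_B$ for the complementary idempotents on $A \oplus B$, I lift $q e_A$ and $q e_B$ to $g_A, g_B \in \End_\B(P)$ with $g_A q = q e_A$ and $g_B q = q e_B$, so $(g_A+g_B)q = q$. Since $\ker q^\ast = \{sp : s\in \Hom_\B(X,P)\}$ by the same long exact sequence, I can subtract a suitable $sp$ from $g_A$ (without disturbing $g_A q = q e_A$, as $pq=0$) to arrange $g_A + g_B = \mathrm{id}_P$. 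Now $pg_Aq = pq e_A = 0$, so $pg_A$ factors through $p$ as $h_A p$ for a unique $h_A \in \End_\B(X)$, and similarly $pg_B = h_B p$; summing and using that $p$ is epic yields $h_A + h_B = \mathrm{id}_X$. Because $X$ is indecomposable, $\End_\B(X)$ is local, so at least one of $h_A, h_B$ is an automorphism---say $h_A$.

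The key observation is then that $h_A p \colon P \to X$ is itself a minimal right $\mathcal P$-approximation of $X$ (post-composition with the automorphism $h_A$ preserves both the approximation property and the minimality), so by uniqueness of such approximations there is an automorphism $\phi$ of $P$ with $p\phi = h_A p$. Consequently $p(\phi^{-1} g_A) = h_A^{-1}(h_A p) = p$, and minimality of $p$ forces $\phi^{-1} g_A$---hence $g_A$---to be an automorphism of $P$. Evaluating $g_A q = q e_A$ on the $B$-summand now gives $g_A \circ q|_B = 0$; since $g_A$ is invertible and $q$ is monic, this forces $B = 0$, a contradiction. Part (b) is the mirror argument: $\Hom_\B(I,-)$ applied to $X \stackrel{i}{\rightarrowtail} I \stackrel{d}{\twoheadrightarrow} \Sigma X$ with $\Ext^1_\B(I,X)=0$ produces a surjection $d_\ast \colon \Hom_\B(I,I) \twoheadrightarrow \Hom_\B(I, \Sigma X)$, and the dual manipulations yield $g_A, g_B \in \End_\B(I)$ with $g_A + g_B = \mathrm{id}_I$, $g_A i = i h_A$, $g_B i = i h_B$, and $h_A + h_B = \mathrm{id}_X$, so I may take $h_A$ to be an automorphism. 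The dual uniqueness statement for minimal left $\mathcal I$-approximations then supplies an automorphism $\psi$ of $I$ with $\psi i = i h_A$, so $(\psi^{-1} g_A)\,i = i$ forces $g_A$ to be an automorphism, and hence the $B$-component of $d$ to vanish---again contradicting $d$ being a deflation when $B\ne 0$.

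The main obstacle I anticipate is bridging the equation $p g_A = h_A p$ (and its dual) back to an equation of the form $p f = p$ (resp.\ $f i = i$) to which the minimality of the approximation directly applies; once this is handled via the uniqueness of minimal one-sided approximations, the rest is routine long-exact-sequence bookkeeping.
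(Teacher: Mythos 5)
Your proof is correct, and it reaches the conclusion by a genuinely different route from the paper's. The paper fixes an indecomposable non-projective summand $Y_1$ of $\Omega X$, composes $Y_1\rightarrowtail \Omega X\rightarrowtail P$ to get an auxiliary short exact sequence $Y_1\rightarrowtail P\twoheadrightarrow X_1$, lifts $q\,e_{Y_1}$ to an endomorphism of $P$ using $\Ext^1_\B(X,P)=0$ (this is essentially your $g_A$), and then argues at the level of objects: $X_1$ is a direct summand of $P\oplus X$, Krull--Schmidt forces $X$ to be a direct summand of $X_1$ (the alternative would make $Y_1$ projective), and right minimality of $p$ shows the induced comparison $\Omega X\to Y_1\to \Omega X$ is an automorphism, whence $\Omega X=Y_1$. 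You never form $X_1$; instead you lift both idempotents of $\Omega X=A\oplus B$, normalize so that $g_A+g_B=1_P$, descend to $h_A+h_B=1_X$, and invoke locality of $\End_\B(X)$ before transporting invertibility back to $g_A$ through the minimality of $p$ (your bridge via uniqueness of minimal right approximations is sound; one could equivalently lift $h_A^{-1}$ through the deflation $p$ and apply minimality twice). What your version buys: the Krull--Schmidt input is isolated in the single fact that $\End_\B(X)$ is local, you avoid the tersely justified step ``$X_1$ is a direct summand of $P\oplus X$,'' and you need no non-projectivity assumption on the summands, so your argument also excludes $\Omega X$ being a decomposable projective --- a case the paper's proof as written does not address. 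What the paper's version buys is uniformity of technique with Lemma \ref{summand}, whose proof reuses the same auxiliary sequence $Y_1\rightarrowtail P\twoheadrightarrow X_1$. Both proofs use $\Ext^1_\B(X,P)=0$ and the right minimality of $p$ at the same junctures, and both treat (b) by duality.
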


\begin{proof}
We only prove (a), (b) is by dual.\\
Let $\Omega X=Y_1\oplus Y_2$ where $Y_1\notin \mathcal P$ is indecomposable, then morphism $\Omega X\xrightarrow{q} P$ has the form $Y_1\oplus Y_2\xrightarrow{\svech{q_1}{q_2}} P$. Morphism $q_1$ admits a short exact sequence $\xymatrix{Y_1 \ar@{ >->}[r]^{q_1} &P \ar@{->>}[r]^{p_1} &X_1}$, since $\Ext^1_\B(X,P)=0$, we get the following commutative diagram:
$$\xymatrix{
Y_1 \ar[d]_{\svecv{1}{0}} \ar@{ >->}[r]^{q_1} &P \ar@{=}[d] \ar@{->>}[r]^{p_1} &X_1 \ar[d] \\
Y_1\oplus Y_2 \ar@{ >->}[r]^-{\svech{q_1}{q_2}} \ar[d]_-{\svech{1}{0}} &P \ar@{->>}[r]^p \ar[d]^a &X \ar[d]\\
Y_1 \ar@{ >->}[r]^{q_1} &P \ar@{->>}[r]^{p_1} &X_1.
}
$$
This implies $X_1$ is a direct summand of $P\oplus X$. If $X_1$ is a direct summand of $P$, then the first row splits and $Y_1\in \mathcal P$, a contradiction. Hence $X$ is a direct summand of $X_1$. Let $X_1=X\oplus X'$, we have the following commutative diagram
$$\xymatrix{
Y_1\oplus Y_2 \ar@{ >->}[r]^-{\svech{q_1}{q_2}} \ar[d]_{\beta} &P \ar@{->>}[r]^p \ar[d]^b &X \ar[d]^{\svecv{1}{0}}\\
Y_1 \ar[d]_{\alpha} \ar@{ >->}[r]^{q_1} &P \ar[d]^a \ar@{->>}[r]^{p_1} &X\oplus X' \ar[d]^{\svech{1}{0}} \\
Y_1\oplus Y_2 \ar@{ >->}[r]^-{\svech{q_1}{q_2}} &P \ar@{->>}[r]^{p} &X.
}
$$
Since $p$ is right minimal, we get $ab$ is an isomorphism. Then $\alpha\beta$ is also an isomorphism, which implies $\Omega X=Y_1$ is indecomposable.\\
\end{proof}

\begin{lem}\label{summand}
Let $\X$ be a subcategory which is closed under direct summands.
\begin{itemize}
\item[(a)] If $\Ext^1_\B(\X,\mathcal P)=0$ and $\mathcal P\subseteq \X$, then $\Omega \X$ is also closed under direct summands;
\item[(b)] If $\Ext^1_\B(\mathcal I,\X)=0$ and $\mathcal I\subseteq \X$, then $\Sigma \X$ is also closed under direct summands.
\end{itemize}
\end{lem}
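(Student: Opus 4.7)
The plan is to prove (a) by combining the split decomposition of $Y_1\oplus Y_2$ with the given short exact sequence via the $3\times 3$ lemma in exact categories, and to obtain (b) by the entirely dual argument (reversing arrows and interchanging inflations/deflations, projectives/injectives, and pushouts/pullbacks).

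For (a), start from a defining short exact sequence $Y_1\oplus Y_2\stackrel{\svech{a}{b}}{\rightarrowtail}P\twoheadrightarrow X$ for $Y_1\oplus Y_2\in\Omega\X$, with $P\in\mathcal P$ and $X\in\X$. Since inflations compose in an exact category, the restriction $a\colon Y_1\hookrightarrow Y_1\oplus Y_2\rightarrowtail P$ is itself an inflation; denote its cokernel by $Z$, so that $Y_1\rightarrowtail P\twoheadrightarrow Z$ is a short exact sequence. The $3\times 3$ lemma applied to the canonical split sequence $Y_1\rightarrowtail Y_1\oplus Y_2\twoheadrightarrow Y_2$ and the inflation $Y_1\oplus Y_2\rightarrowtail P$ then produces a companion short exact sequence $Y_2\rightarrowtail Z\twoheadrightarrow X$. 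The whole problem reduces to verifying $Z\in\X$, since then $Y_1\rightarrowtail P\twoheadrightarrow Z$ witnesses $Y_1\in\Omega\X$.

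To obtain $Z\in\X$ I would form the pushout of $Y_2\rightarrowtail Z$ along the inflation $b\colon Y_2\rightarrowtail P$ (the restriction of $\svech{a}{b}$ to $Y_2$), giving the commutative diagram
$$\xymatrix{
Y_2 \ar@{ >->}[r] \ar@{ >->}[d]_b & Z \ar[d] \ar@{->>}[r] & X \ar@{=}[d] \\
P \ar@{ >->}[r] & W \ar@{->>}[r] & X.
}$$
The bottom row is a short exact sequence with $X\in\X$ and $P\in\mathcal P$, so the hypothesis $\Ext^1_\B(\X,\mathcal P)=0$ forces it to split, giving $W\cong P\oplus X$, which lies in $\X$ because $\mathcal P\subseteq\X$ and $\X$ is closed under finite direct sums. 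The pushout simultaneously produces an inflation $Z\rightarrowtail W$ whose cokernel is $P/Y_2$, so $Z$ sits as a subobject of an element of $\X$. Part (b) is obtained dually: begin with $X\rightarrowtail I\twoheadrightarrow Y_1\oplus Y_2$, use the composition of deflations $I\twoheadrightarrow Y_1\oplus Y_2\twoheadrightarrow Y_1$ together with the dual of the $3\times 3$ lemma to manufacture the companion sequence, and replace the pushout with a pullback that splits via $\Ext^1_\B(\mathcal I,\X)=0$, using $\mathcal I\subseteq\X$ to conclude.

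The main obstacle is the final identification $Z\in\X$ (and its dual in (b)): the pushout furnishes $W\in\X$ with $Z$ sitting inside as a subobject, but promoting this to $Z\in\X$ uses both hypotheses jointly. The vanishing $\Ext^1_\B(\X,\mathcal P)=0$ kills the extension along the bottom row of the pushout, while $\mathcal P\subseteq\X$ and closure of $\X$ under direct summands are what transport the resulting summand decomposition back to $Z$. The cleanest route is to analyze the inflation $Z\rightarrowtail W$ in coordinates relative to the splitting $W\cong P\oplus X$ to exhibit $Z$ as a direct summand of $W$; alternatively, running the pushout symmetrically in both $Y_1$ and $Y_2$ should pin down $Z$ from both sides.
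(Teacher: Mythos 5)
Your reduction coincides with the paper's: both arguments come down to showing that the cokernel $Z$ of the inflation $a\colon Y_1\rightarrowtail P$ (the paper's $X_1$) lies in $\X$, and both aim to realize $Z$ as a direct summand of $P\oplus X$. Your preliminary steps (that $a$ is an inflation, the Noether sequence $Y_2\rightarrowtail Z\twoheadrightarrow X$, the pushout along $b$, and the splitting $W\cong P\oplus X\in\X$ from $\Ext^1_\B(X,P)=0$) are all correct. But the decisive step is exactly where you stop. What you have actually established is only that $Z$ admits an inflation into an object of $\X$ with cokernel $\Coker(b)=P/Y_2$; since $\X$ is assumed closed under direct summands, not under subobjects, this proves nothing yet. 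The hypothesis $\Ext^1_\B(X,P)=0$ splits the row $P\rightarrowtail W\twoheadrightarrow X$, but it says nothing a priori about the other row through the pushout, $Z\rightarrowtail W\twoheadrightarrow P/Y_2$, whose class lives in $\Ext^1_\B(P/Y_2,Z)$ --- a group over which you have no control. Neither of your two suggested completions (``analyze the inflation in coordinates'', ``run the pushout symmetrically'') is an argument: producing a retraction $W\to Z$ is precisely the missing content, and an inflation into $P\oplus X$ need not split merely because its target lies in $\X$.

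The gap can be closed in two ways. The paper's way bypasses the pushout entirely: it uses $\Ext^1_\B(X,P)=0$ to extend $a\circ\svech{1}{0}\colon Y_1\oplus Y_2\to P$ over the inflation $\svech{a}{b}$, i.e.\ to build a morphism of short exact sequences from $Y_1\oplus Y_2\rightarrowtail P\twoheadrightarrow X$ down to $Y_1\rightarrowtail P\twoheadrightarrow Z$ lifting the projection onto $Y_1$; composed with the obvious morphism in the other direction this gives an endomorphism of $Y_1\rightarrowtail P\twoheadrightarrow Z$ which is the identity on $Y_1$, so the induced composite $Z\to X\to Z$ equals $1_Z$ plus a map factoring through $P$, which exhibits $Z$ as a direct summand of $X\oplus P$. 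Alternatively, your own pushout does work, but for a reason you did not supply: the extension $Z\rightarrowtail W\twoheadrightarrow P/Y_2$ is the pushout of $Y_2\stackrel{b}{\rightarrowtail} P\twoheadrightarrow P/Y_2$ along the inflation $i\colon Y_2\rightarrowtail Z$, and $i$ factors through $b$ (namely $i=p_1\circ b$ with $p_1\colon P\twoheadrightarrow Z$ the cokernel of $a$); since pushing an extension forward along its own inflation kills its class, $i_*$ of this class is zero and $Z\rightarrowtail W$ splits. One of these arguments must actually be written down; as it stands the proof is incomplete at its only nontrivial step. Part (b) is then fine as a formal dual once (a) is repaired.
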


\begin{proof}
We only prove (a), (b) is by dual.\\
Let $X\in \X$, it admits a short exact sequence $\xymatrix{\Omega X \ar@{ >->}[r]^{q} &P \ar@{->>}[r]^{p} &X}$ where $P\in \mathcal P$. Let $\Omega X=Y_1\oplus Y_2$, then morphism $\Omega X\xrightarrow{q} P$ has the form $Y_1\oplus Y_2\xrightarrow{\svech{q_1}{q_2}} P$. Morphism $q_1$ admits a short exact sequence $\xymatrix{Y_1 \ar@{ >->}[r]^{q_1} &P \ar@{->>}[r]^{p_1} &X_1}$, since $\Ext^1_\B(\X,\mathcal P)=0$, we get the following commutative diagram:
$$\xymatrix{
Y_1 \ar[d]_{\svecv{1}{0}} \ar@{ >->}[r]^{q_1} &P \ar@{=}[d] \ar@{->>}[r]^{p_1} &X_1 \ar[d]^{x_1} \\
Y_1\oplus Y_2 \ar@{ >->}[r]^-{\svech{q_1}{q_2}} \ar[d]_-{\svech{1}{0}} &P \ar@{->>}[r]^p \ar[d]^a &X \ar[d]^{x_1}\\
Y_1 \ar@{ >->}[r]^{q_1} &P \ar@{->>}[r]^{p_1} &X_1.
}
$$
Then $X_1$ is a direct summand of $X\oplus P\in \X$. Since $\X$ is closed under direct summands, we have $X_1\in \X$. Hence by definition $Y_1\in \Omega \X$.
\end{proof}


According to the definition of cotorsion pair, we get the following useful remark.

\begin{rem}
Let $(\U,\V)$ be a cotorsion pair of $\B$, then
\begin{itemize}
\item[(a)] $\U={^{\bot_1}}\V:=\{X \in \B\text{ }|\text{ } \Ext^1_\B(X,\V)=0\}$;

\item[(b)] $\V=\U^{\bot_1}:=\{ X \in \B \text{ }| \text{ }\Ext^1_\B(\U,X)=0\}$;

\item[(c)] $\U$ and $\V$ are closed under extension;

\item[(d)] $\mathcal P \subseteq \U$ and $\mathcal I \subseteq \V$.

\end{itemize}
\end{rem}

\begin{defn}
Let $\h$ be the subcategory consisting of objects $B\in \B$ which admits two short exact sequence
\begin{align*}
V_B\rightarrowtail U_B\twoheadrightarrow B,\quad
B\rightarrowtail V^B\twoheadrightarrow U^B
\end{align*}
where $U^B\in \U$ and $V_B\in \V$, $U_B,V^B\in \U\cap \V$, quotient category $\h/(\U\cap \V)$ is called the \emph{heart} of $(\U,\V)$.
\end{defn}

The name "heart" of cotorsion pair comes from "heart of $t$-structure", since on triangulated category, cotorsion pair is a generalization of $t$-structure (see \cite{N1}).

We have the following theorem.

\begin{thm}\cite{L,LN}
The heart of a cotorsion pair $(\U,\V)$ is abelian.
\end{thm}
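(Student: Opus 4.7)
The strategy I would follow is the one pioneered by Nakaoka \cite{N1} for triangulated cotorsion pairs, adapted to the exact setting. To organize the construction, I introduce auxiliary subcategories: let $\B^+$ consist of $B\in\B$ admitting a short exact sequence $B\rightarrowtail V^B\twoheadrightarrow U^B$ with $V^B\in\U\cap\V$ and $U^B\in\U$, and dually $\B^-$ for those admitting $V_B\rightarrowtail U_B\twoheadrightarrow B$ with $U_B\in\U\cap\V$ and $V_B\in\V$; by definition $\h=\B^+\cap\B^-$. The first task is to produce ``truncation'' functors $\sigma^+,\sigma^-$ on the quotient $\B/(\U\cap\V)$ landing in $\B^+$ and $\B^-$ respectively. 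For this, given $B\in\B$, I take the cotorsion pair approximation $V_B\rightarrowtail U_B\twoheadrightarrow B$ and then further resolve $V_B$ by an inflation $V_B\rightarrowtail V^{V_B}\twoheadrightarrow U^{V_B}$, using push-out axiom (c) to splice these into a sequence whose middle term lies in $\B^+$; well-definedness modulo $\U\cap\V$ uses $\Ext^1_\B(\U,\V)=0$ together with the closure of $\U,\V$ under extensions noted in the remark.

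For a morphism $\bar f\colon A\to B$ in $\overline{\h}:=\h/(\U\cap\V)$ lifted to $f\colon A\to B$ in $\h$, I build the cokernel by pushing out $f$ along the inflation $A\rightarrowtail V^A$ coming from the defining sequence of $A\in\B^+$, producing an object $X$ in a short exact sequence $B\rightarrowtail X\twoheadrightarrow U^A$; applying $\sigma^-$ returns an object of $\h$ that represents the cokernel of $\bar f$. Dually, the kernel is obtained by pulling back $f$ along the deflation $U_B\twoheadrightarrow B$ and then applying $\sigma^+$. Verifying the universal properties modulo $\U\cap\V$ is a routine diagram chase using axiom (c), the $\Ext$-vanishing $\Ext^1_\B(\U,\V)=0$, and the observation that any morphism from an object of $\U$ to an object of $\V$ whose composition with a given inflation/deflation vanishes must factor through $\U\cap\V$.

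The main obstacle is the first isomorphism theorem: for every morphism $\bar f$ in $\overline{\h}$, the canonical comparison from the coimage to the image constructed above must be invertible. Starting from $f\colon A\to B$, one iterates the pushout/pullback constructions to produce a commuting grid of short exact sequences whose outer terms are forced, by the construction and by $\Ext^1_\B(\U,\V)=0$, to lie in $\U$ and $\V$ in precisely the pattern needed, and then one shows the comparison morphism factors through $\U\cap\V$ and therefore vanishes in $\overline{\h}$. The subtlety is maintaining the $\B^+$ and $\B^-$ conditions at each step of the iteration, which requires repeated application of axiom (c) together with closure of $\U,\V$ under extensions; this is where the exact-category axioms are doing all the real work. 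Once this identification is established, additivity of $\overline{\h}$ is automatic since $\h$ is closed under finite direct sums (defining sequences add termwise) and passing to the quotient preserves biproducts, and we conclude that $\overline{\h}$ is abelian.
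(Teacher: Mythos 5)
The paper does not prove this statement itself; it cites \cite{L,LN}, and your proposal reproduces exactly the strategy of those sources (Nakaoka's construction transported to exact categories): define $\B^{+}$ and $\B^{-}$ with $\h=\B^{+}\cap\B^{-}$, build truncation functors $\sigma^{\pm}$ from the cotorsion-pair approximation sequences, obtain the cokernel of $\overline{f}$ by pushing out along $A\rightarrowtail V^{A}$ and applying $\sigma^{-}$, the kernel dually, and then verify the coimage--image comparison. So the architecture is the intended one, and the kernel/cokernel constructions you describe are correct.

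There is, however, a genuine problem in the one place where all the content of the cited proofs lives. You write that for the first isomorphism theorem ``one shows the comparison morphism factors through $\U\cap\V$ and therefore vanishes in $\overline{\h}$.'' That cannot be what is proved: the canonical map from coimage to image must be shown to be an \emph{isomorphism} in $\overline{\h}$, and if it vanished then both coimage and image would be zero for every morphism. What the references actually do is construct, from the pushout/pullback grid attached to $f$, explicit short exact sequences presenting a common object that computes both $\operatorname{Coker}(\ker\overline{f})$ and $\operatorname{Ker}(\operatorname{coker}\overline{f})$, and then check that the induced comparison is invertible; the ``factors through $\U\cap\V$, hence is zero'' argument is used only for auxiliary composites (e.g.\ to verify monicity or the universal properties), not for the comparison map itself. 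Since the rest of your sketch defers to this step, as written it does not close; you would need to replace that sentence with the actual identification of coimage and image, which is the substantial part of the proof in \cite{L} and \cite{LN}.
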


Let $(\C,\K)$ be a cotorsion pair where $\C$ is rigid, then $\h=\CoCone(\C,\C)$, $\h\cap\K=\C$ and $\h/\C$ is the heart of $(\C,\K)$. Let $H$ be the half-exact functor defined in \cite{L2}, it is an additive functor and has the following properties (see \cite[Section 3, Section 4]{L2} for details):

\begin{itemize}
\item For any short exact sequence $\xymatrix{A \ar@{ >->}[r]^{f} &B \ar@{->>}[r]^{g} &C}$ in $\B$, the sequence $F(A)\xrightarrow{F(f)} F(B)\xrightarrow{F(g)} F(C)$ is exact in $\h/\C$;
\item If $X\in \h$, then $H(X)=X$;
\item $H(f)=0$ if and only if $f$ factors through $\K$.
\end{itemize}

By the same method as in Lemma \ref{summand}, we get the following lemma, the proof is left to the readers.

\begin{lem}\label{summand2}
Let $(\C,\K)$ be a cotorsion pair where $\C$ is rigid, then $\h=\CoCone(\C,\C)$ is closed under direct summands.
\end{lem}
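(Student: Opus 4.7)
The plan is to mimic the argument of Lemma \ref{summand} directly on the short exact sequence provided by $\h = \CoCone(\C,\C)$. Let $X = Y_1 \oplus Y_2 \in \h$; by definition there is a short exact sequence $\xymatrix{X \;\ar@{ >->}[r]^{i} & C_1 \ar@{->>}[r]^{p} & C_2}$ with $C_1, C_2 \in \C$, and we decompose $i = \svech{i_1}{i_2}$. Since $Y_1 \xrightarrow{\svecv{1}{0}} X \xrightarrow{i} C_1$ is a composition of inflations, $i_1$ is itself an inflation and yields a short exact sequence $\xymatrix{Y_1 \;\ar@{ >->}[r]^{i_1} & C_1 \ar@{->>}[r]^{p_1} & C_1''}$. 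It suffices to show $C_1'' \in \C$, for then $Y_1 \in \CoCone(\C,\C) = \h$; the symmetric argument handles $Y_2$.

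The key step is to produce splitting data using the rigidity of $\C$. Applying $\Hom_\B(-,C_1)$ to the original sequence, the vanishing $\Ext^1_\B(C_2,C_1)=0$ gives a surjection $\Hom_\B(C_1,C_1) \twoheadrightarrow \Hom_\B(X,C_1)$, so the morphism $\svech{0}{i_2} : X \to C_1$ lifts to some $a : C_1 \to C_1$ with $a i_1 = 0$ and $a i_2 = i_2$. Since $a i_1 = 0$, $a$ factors uniquely as $a = \phi p_1$ for some $\phi : C_1'' \to C_1$, with $\phi p_1 i_2 = i_2$. Similarly, $p i_1 = 0$ forces $p = x_1 p_1$ for a unique $x_1 : C_1'' \to C_2$. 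The obscure axiom applied to the chain of inflations $Y_1 \rightarrowtail X \rightarrowtail C_1$ produces a short exact sequence $\xymatrix{Y_2 \;\ar@{ >->}[r]^{\iota} & C_1'' \ar@{->>}[r]^{x_1} & C_2}$, where $\iota = p_1 i_2$.

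The core calculation, parallel to the one in Lemma \ref{summand}, is that $C_1''$ is a direct summand of $C_2 \oplus C_1$. One verifies $p_1 \phi \iota = p_1 a i_2 = p_1 i_2 = \iota$, so the endomorphism $p_1 \phi - \id_{C_1''}$ of $C_1''$ vanishes on $\iota$ and therefore factors through $x_1$ as $\mu x_1$ for some $\mu : C_2 \to C_1''$. Reading the identity $p_1 \phi - \mu x_1 = \id_{C_1''}$ as $\svech{-\mu}{p_1} \circ \svecv{x_1}{\phi} = \id_{C_1''}$ exhibits $C_1''$ as a direct summand of $C_2 \oplus C_1 \in \C$. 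Since $\C$ is closed under direct summands, being the left-hand side of a cotorsion pair, we conclude $C_1'' \in \C$ and hence $Y_1 \in \h$.

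The main obstacle is pinning down the correct lift $\phi$ so that $p_1 \phi$ differs from $\id_{C_1''}$ only by a morphism factoring through $x_1$; once $\phi$ and $\mu$ are produced, the splitting is a one-line matrix computation. I expect no new exact-category subtleties beyond the one application of the obscure axiom needed to realise $Y_2 \rightarrowtail C_1'' \twoheadrightarrow C_2$ as an honest short exact sequence.
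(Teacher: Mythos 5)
Your proof is correct and takes essentially the same approach as the paper, which establishes this lemma precisely by transplanting the method of Lemma \ref{summand}: use $\Ext^1_\B(C_2,C_1)=0$ to lift a morphism off the inflation $X\rightarrowtail C_1$ and thereby exhibit the cokernel $C_1''$ of $Y_1\rightarrowtail C_1$ as a direct summand of $C_2\oplus C_1\in\C$. Your explicit splitting $\id_{C_1''}=p_1\phi-\mu x_1$ obtained via the Noether sequence $Y_2\rightarrowtail C_1''\twoheadrightarrow C_2$ is just a streamlined version of the three-row diagram used there, so the two arguments coincide in substance.
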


Now we introduce the following concept:

\begin{defn}\label{fully}
A rigid subcategory $\C$ is called \emph{fully rigid} if
\begin{itemize}
\item it admits a cotorsion pair $(\C,\K)$, $\C_{\mathcal P}\neq 0$.
\item any indecomposable object in $\B$ either belongs to $\K$ or belongs to $\h$.
\end{itemize}
A cotorsion pair $(\C,\K)$ is called \emph{fully rigid} if $\C$ is fully rigid.
\end{defn}

Denote $\B'/\mathcal P$ by $\underline \B'$ if $\mathcal P\subseteq \B'\subseteq \B$, for any morphism $f\colon A\to B$ in $\B$, denote by $\underline{f}$ the image of $f$ under the natural quotient functor $\B\to \uB$. We denote $\B'/\K$ by $\overline \B'$ if $\K\subseteq \B'\subseteq \B$, for any morphism $g\colon A\to B$ in $\B$, we denote by $\overline{g}$ the image of $g$ under the natural quotient functor $\B\to \oB$. We have the following result.

\begin{prop}\label{eq}
If $(\C,\K)$ is fully rigid, then
\begin{itemize}
\item[(a)] $\oB\simeq \h/\C\simeq \mod\underline {\Omega \C}\simeq \mod\underline \C$;
\item[(b)] subcategory $\underline {\Omega \C}$ is the enough projectives of $\oB$.
\end{itemize}
\end{prop}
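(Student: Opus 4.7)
The plan is to establish the chain of equivalences in (a) in three steps, and then read off (b) from the projective structure on $\h/\C$ uncovered in the middle step. Throughout I will use that rigidity of $\C$ gives $\C\subseteq \C^{\bot_1}=\K$, so in particular $\h\cap \K=\C$ (as stated just before Lemma \ref{summand2}) and the subgroup $\C(X,Y)$ of morphisms factoring through $\C$ is contained in $\K(X,Y)$.

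For the equivalence $\oB\simeq \h/\C$, the composite $\h\hookrightarrow \B\to \oB$ kills $\C$ and hence induces a functor $F\colon \h/\C\to \oB$. For essential surjectivity I invoke the fully rigid hypothesis together with the Krull-Schmidt property: any $B\in \B$ decomposes as $B_\K\oplus B_\h$ with $B_\K\in \K$ and $B_\h\in \h$, so $B\cong B_\h$ in $\oB$. For faithfulness (fullness being automatic from compatible quotients), I use the half-exact functor $H$ from \cite{L2}: if $\overline f=0$ for $f\colon X\to Y$ with $X,Y\in \h$, then $f$ factors through $\K$, so $H(f)=0$; but $H|_\h$ coincides with the projection $\h\to \h/\C$, forcing $f$ to already be zero in $\h/\C$.

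For the middle equivalence $\h/\C\simeq \mod\underline{\Omega\C}$ together with (b), I first note that the syzygy sequence $\Omega C\rightarrowtail P\twoheadrightarrow C$ places $\Omega C$ in $\CoCone(\mathcal P,\C)\subseteq \h$ for every $C\in \C$. The key computation will be $\Hom_{\h/\C}(\Omega C,X)\cong \Ext^1_\B(C,X)$ for $X\in \h$: applying $\Hom_\B(-,X)$ to the syzygy sequence and using $\Ext^1_\B(P,X)=0$ yields $\Hom_\B(\Omega C,X)/\mathcal P(\Omega C,X)\cong \Ext^1_\B(C,X)$, and any $\Omega C\to C'\to X$ with $C'\in \C$ lies in $\mathcal P(\Omega C,X)$ because pushing out the syzygy sequence along $\Omega C\to C'$ produces an extension $C'\rightarrowtail E\twoheadrightarrow C$ that splits by rigidity of $\C$. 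Combining this identification with the existence of an epimorphism $\Omega U^X\to X$ in $\h/\C$ (constructed by lifting the projective cover $P\twoheadrightarrow U^X$ through the cotorsion pair sequence $X\rightarrowtail V^X\twoheadrightarrow U^X$, using projectivity of $P$), and iterating to produce presentations $\Omega C_1\to \Omega C_0\to X\to 0$, shows that $\underline{\Omega\C}$ is a class of projective generators with enough presentations. This gives both $\h/\C\simeq \mod\underline{\Omega\C}$ and (b) simultaneously. For the final equivalence $\mod\underline{\Omega\C}\simeq \mod\underline\C$, I would construct the syzygy functor $\underline{\Omega}\colon \underline\C\to \underline{\Omega\C}$ and prove it an equivalence via a parallel dimension-shift computation: essential surjectivity is immediate from the definition of $\Omega\C$, while the isomorphism $\uHom(\Omega C,\Omega C')\cong \uHom(C,C')$ will follow from shifting twice and invoking rigidity, and an equivalence at the level of $\mod$ then follows formally.

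The main obstacle will be verifying the projective property of $\Omega C$ and the epimorphicity of $\Omega U^X\to X$ in the abelian heart $\h/\C$, since short exact sequences there do not come verbatim from $\B$ but are built via the half-exactness of $H$; bridging the two structures precisely is where the real work lies. A secondary concern is bookkeeping the nested ideals $\mathcal P(X,Y)\subseteq \C(X,Y)\subseteq \K(X,Y)$ in $\Hom_\B(X,Y)$ across the stable quotients $\underline\B$, $\oB$ and $\h/\C$, so that the various identifications of $\Hom$-spaces descend cleanly.
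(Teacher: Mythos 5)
Your proposal is essentially correct, and for the first equivalence it coincides with the paper's own argument: the paper constructs $\overline H\colon \oB\to \h/\C$ from the half-exact functor $H$ and checks it is dense, full and faithful using exactly the two properties you invoke ($H(f)=0$ iff $f$ factors through $\K$, and $H$ restricted to $\h$ is the projection to $\h/\C$); your functor $F$ is its quasi-inverse, and your Krull--Schmidt decomposition $B\cong B_{\K}\oplus B_{\h}$ is the same use of full rigidity that the paper makes for density/fullness. Where you diverge is that the paper obtains the remaining two equivalences and statement (b) purely by citation --- $\oB\simeq\mod\underline{\Omega\C}$ from \cite[Theorem 1.2]{LZ}, $\h/\C\simeq\mod\underline\C$ from \cite[Theorem 3.2]{DL}, and the enough-projectives claim from \cite[Theorem 4.10]{LN} together with the observation that a morphism out of $\Omega\C$ factors through $\K$ iff it factors through $\mathcal P$ --- whereas you set out to reprove them. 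Your key computation is sound: since $\mathcal P\subseteq\C$ and $\C$ is rigid, $\Ext^1_{\B}(\C,\mathcal P)=0$, so for $X\in\h$ the ideals $\mathcal P(\Omega C,X)$ and $\C(\Omega C,X)$ both coincide with the image of $\Hom_{\B}(P,X)\to\Hom_{\B}(\Omega C,X)$, giving $\Hom_{\h/\C}(\Omega C,X)\cong\Ext^1_{\B}(C,X)$; the same vanishing $\Ext^1_{\B}(\C,\mathcal P)=0$ is what makes your functor $\underline\Omega\colon\underline\C\to\underline{\Omega\C}$ full (the cokernel of $\uHom(C,C')\to\uHom(\Omega C,\Omega C')$ is a quotient of $\Ext^1_{\B}(C,P')$), so your route to $\mod\underline{\Omega\C}\simeq\mod\underline\C$ is more direct than the paper's composition of cited equivalences. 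The one genuine piece of unfinished business is the one you flag yourself: showing that the objects of $\Omega\C$ are actually projective with respect to the abelian exact structure of $\h/\C$, which is defined via the half-exact functor $H$ rather than inherited verbatim from $\B$. That step is not routine and is precisely the content of \cite[Theorem 4.10]{LN} and \cite[Theorem 1.2]{LZ}; without either executing it or citing it, the middle equivalence and part (b) remain incomplete.
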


\begin{rem}
If $(\C,\K)$ is fully rigid, then $\B_\K=\h_\K$.
\end{rem}

\begin{proof}
(a) Since $H(\K)=0$, we have the following commutative diagram:
$$\xymatrix@C=0.9cm@R0.4cm{
\B \ar[rr]^H \ar[dr]_{\pi} &&\h/\C\\
&\oB \ar@{.>}[ur]_{\overline H}
}
$$
we show $\overline H$ is an equivalence.\\
By the property of the half-exact functor $H$, $\overline H$ is dense. $H(f)=0$ if and only if $f$ factors through $\K$, hence $\overline H$ is faithful. By the definition of fully rigid, for any indecomposable object $A\notin \K$, $H(A)=A$ in $\h/\C$, hence $\overline H$ is full. Thus $\overline H$ is an equivalence. Since $H$ is half-exact, the functor $\pi:\B\to \oB$ is also half-exact.\\
The equivalence $\oB\simeq \mod \underline {\Omega \C}$ is give by \cite[Theorem 1.2]{LZ}. \\
The equivalence $\h/\C\simeq \mod \underline \C$ is given by \cite[Theorem 3.2]{DL}.\\
Hence we have $\oB\simeq \h/\C\simeq \mod\underline {\Omega \C}\simeq \mod\underline \C$.\\
(b) Note that a morphism $f$ in $\Omega \C$ factors through $\K$ if and only if it factors through $\mathcal P$. Since $\oB\simeq \h/\C$ and $\underline {\Omega \C}$ is the enough projectives in $\h/\C$ by \cite[Theorem 4.10]{LN}, we get subcategory $\underline {\Omega \C}$ is the enough projectives of $\oB$.
\end{proof}

According to this proposition, the definition of fully rigid in this article is an analog of \cite[Definition 5.1]{B}.

\begin{lem}\label{D}
Let $(\C,\K),(\K,\D)$ be cotorsion pairs where $\C$ is fully rigid, then any object $X\in \h_\K$ belongs to $\Cone(\D,\D)$.
\end{lem}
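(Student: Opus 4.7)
The plan is to combine the $\h$-cover of $X$ with the $(\K,\D)$-preenvelope machinery via a pushout, and then iterate the preenvelope once more so that everything lies in $\D$.

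Since $X \in \h_\K \subseteq \h$, one has a short exact sequence $V_X \rightarrowtail U_X \twoheadrightarrow X$ with $V_X \in \K$ and $U_X \in \C \cap \K$. Because $V_X \in \K$, the cotorsion pair $(\K,\D)$ provides a $\D$-preenvelope $V_X \rightarrowtail D \twoheadrightarrow K$ with $D \in \D$ and $K \in \K$. I would then form the pushout of $V_X \rightarrowtail U_X$ along $V_X \rightarrowtail D$; the standard $3 \times 3$ argument produces an object $E$ fitting into two short exact sequences, namely the middle column $D \rightarrowtail E \twoheadrightarrow X$ (so the left end is already in $\D$) and the middle row $U_X \rightarrowtail E \twoheadrightarrow K$, from which $E \in \K$ follows by extension closure of $\K$.

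To upgrade $E$ to an object of $\D$, I would apply the $(\K,\D)$-preenvelope once more to $E \in \K$, obtaining $E \rightarrowtail D' \twoheadrightarrow K'$ with $D' \in \D$ and $K' \in \K$; the composite inflation $D \rightarrowtail E \rightarrowtail D'$ has cokernel $Y$ fitting both in $D \rightarrowtail D' \twoheadrightarrow Y$, which exhibits $Y \in \Cone(\D,\D)$, and in $X \rightarrowtail Y \twoheadrightarrow K'$. The hardest step is then to pass from $Y$ back to $X$: my plan is to exploit that $X \in \h_\K$ has no $\K$-summand, together with the fully-rigid decomposition of $\B$, to force the extension $X \rightarrowtail Y \twoheadrightarrow K'$ to split so that $Y \cong X \oplus K'$, and then to extract a sub-presentation $D_1 \rightarrowtail D_2 \twoheadrightarrow X$ with $D_1, D_2 \in \D$ from the $\Cone(\D,\D)$-presentation of $Y$. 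The principal technical obstacle is showing that $K'$ itself can be arranged to lie in $\K \cap \D$, so that the preimage of $K'$ inside $D'$ remains in $\D$ by the extension closure of $\D$; this is where the interaction between the two cotorsion pairs and the fully-rigid hypothesis must be used most delicately.
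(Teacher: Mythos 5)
Your construction of $Y$ is sound and parallels the paper's: the paper reaches the same configuration more directly by applying the approximation sequences of $(\K,\D)$ first to $X$ (giving $D_X\rightarrowtail K_X\twoheadrightarrow X$ with $D_X\in\D$, $K_X\in\K$) and then to $K_X$, and pushing out; either way one lands on $Y\in\Cone(\D,\D)$ together with a short exact sequence $X\rightarrowtail Y\twoheadrightarrow K'$ with $K'\in\K$. The gap lies in the two steps you leave as ``plans''. First, you give no actual mechanism for the splitting of $X\rightarrowtail Y\twoheadrightarrow K'$. The paper's mechanism is: reduce to $X$ indecomposable (harmless, since $\Cone(\D,\D)$ is additive), then apply the quotient functor $\pi=H$ to the diagram; since $H$ kills $\K$ and is half-exact, $\overline{x}\colon X\to Y$ becomes an isomorphism in $\oB$; then, because $\End_\B(X)$ is local and $X\notin\K$, any $y$ with $\overline{yx}=\overline{1_X}$ forces $yx$ to be invertible (otherwise $1_X$ would factor through $\K$ and $X$ would be a direct summand of an object of $\K$), so $x$ is a section. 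Appealing to ``the fully-rigid decomposition of $\B$'' alone does not produce this; you need the functor $H$ and the Krull--Schmidt reduction.

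Second, your route from $Y\cong X\oplus K'$ back to $X\in\Cone(\D,\D)$ --- arranging $K'\in\K\cap\D$ and taking the preimage of $K'$ inside $D'$ --- is both unnecessary and unworkable: nothing in the hypotheses lets you force $K'$ into $\D$, which is precisely the obstacle you acknowledge you cannot overcome. The correct observation is that $\D=\K^{\bot_1}\subseteq\C^{\bot_1}=\K$, hence $\Ext^1_\B(\D,\D)=0$, i.e.\ $\D$ is rigid; the dual of Lemma \ref{summand2} then says $\Cone(\D,\D)$ is closed under direct summands, so $X$, being a summand of $Y\in\Cone(\D,\D)$, lies in $\Cone(\D,\D)$ with no further work. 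With these two repairs your argument closes and becomes essentially the paper's proof.
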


\begin{proof}
Since $\D\subseteq \K^{\bot_1}\subseteq \C^{\bot_1}=\K$, we get $\D$ is rigid and the heart of $(\K,\D)$ is $\Cone(\D,\D)/\D$.\\
Let $X\in \h_\K$ be an indecomposable obejct, it admits the following commutative diagram
$$\xymatrix{
D_X \ar@{ >->}[r] \ar@{=}[d] &K_X \ar@{->>}[r] \ar@{ >->}[d] &X \ar@{ >->}[d]^x\\
D_X \ar@{ >->}[r] &D \ar@{->>}[r] \ar@{->>}[d] &Y \ar@{->>}[d]\\
&K \ar@{=}[r] &K
}
$$
where $D_X,D\in \D$ and $K_X,K\in \K$. Then $Y\in \Cone(\D,\D)$, by applying $H=\pi$ to this diagram, we get an isomorphism $X\xrightarrow{\overline x} Y$. Then $x$ is a section and $X$ is a direct summand of $Y$, by the dual of Lemma \ref{summand2}, $X\in \Cone(\D,\D)$.
\end{proof}

\section{Gorenstein property}

In this section, let $(\C,\K)$ be fully rigid. We also assume $\K$ admits another cotorsion pair $(\K,\D)$.

\begin{prop}\label{inj}
Subcategory $(\Sigma \D)/\mathcal I$ is the enough injectives in $\oB$.
\end{prop}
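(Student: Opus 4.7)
The plan is to derive this from Proposition \ref{eq}(b) by passing to the opposite exact category $\B^{\op}$. The cotorsion pair $(\K, \D)$ in $\B$ yields a cotorsion pair $(\D, \K)$ in $\B^{\op}$, and if $\D$ can be shown to be fully rigid there, then Proposition \ref{eq}(b) applied in $\B^{\op}$ identifies the enough projectives of $\oB^{\op} = (\B/\K)^{\op}$, which by duality are precisely the enough injectives of $\oB$.

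The substantive step is verifying full rigidity of $\D$ in $\B^{\op}$. Rigidity $\Ext^1_\B(\D, \D) = 0$ already appears in the proof of Lemma \ref{D} via the inclusions $\D \subseteq \K^{\bot_1} \subseteq \C^{\bot_1} = \K$. The heart of $(\D, \K)$ in $\B^{\op}$ is $\CoCone_{\B^{\op}}(\D, \D) = \Cone_{\B}(\D, \D)$, so the indecomposable clause requires that every indecomposable of $\B$ lies in $\K$ or in $\Cone(\D, \D)$: by full rigidity of $\C$ every indecomposable sits in $\K$ or in $\h = \CoCone(\C, \C)$; within $\h$, those in $\h \cap \K = \C \subseteq \K$ go to the $\K$-case, and the remaining ones in $\h_{\K}$ are placed inside $\Cone(\D, \D)$ by Lemma \ref{D}. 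The mild non-degeneracy $\mathcal I \varsubsetneq \D$, dual to $\mathcal P \varsubsetneq \C$, is taken as a standing assumption.

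Once $\D$ is fully rigid in $\B^{\op}$, Proposition \ref{eq}(b) asserts that $\underline{\Omega_{\B^{\op}} \D}$ is the enough projectives of $\oB^{\op}$. Since $\mathcal P_{\B^{\op}} = \mathcal I$ and the functor $\Omega_{\B^{\op}}$ in $\B^{\op}$ coincides with the functor $\Sigma$ of the excerpt (the kernel of a deflation from a projective in $\B^{\op}$ is the cokernel of an inflation into an injective in $\B$), this subcategory is exactly $(\Sigma \D)/\mathcal I$. Dualizing from $\oB^{\op}$ back to $\oB$ converts enough projectives into enough injectives, and the conclusion follows. The only real obstacle is the indecomposable clause in the fully rigid check, which is precisely the content of Lemma \ref{D}.
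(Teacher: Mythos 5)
Your proof is correct and is essentially the paper's intended argument: the paper's entire proof is a one-line appeal to the duals of \cite[Proposition 4.9, Theorem 4.10]{LN}, which is precisely the dualization of Proposition \ref{eq}(b) that you spell out, with Lemma \ref{D} supplying the fully-rigidity of $\D$ in $\B^{\op}$. The only loose end, the nondegeneracy $\mathcal I\varsubsetneq\D$ that you take as a standing assumption, is harmless: if $\D=\mathcal I$ then $\K={^{\bot_1}}\mathcal I=\B$ and $\oB=0$, so the statement is vacuous there.
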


\begin{proof}
This is an analog of the results in \cite[Proposition 4.9,Theorem 4.10]{LN}.
\end{proof}

One typical kind of example of fully rigid subcategories is cluster tilting subcategories (please check the related results in \cite[Proposition 10.5]{L}). According to \cite{KZ}, $\mod \M$ is 1-Gorenstein if $\M$ is a cluster tilting subcategory on a triangulated category. In \cite{B}, a generalized result was shown for fully rigid subcategories on triangulated category (see \cite[Theorem 5.2]{B}). Unfortunately, such results are not always true on exact categories, even for cluster tilting subcategories. Here is an example.

\begin{exm}\label{ex1}
Let $\Lambda$ be the $k$-algebra given by the quiver
$$\xymatrix@C=0.4cm@R0.4cm{
&&3 \ar[dl]\\
&5 \ar[dl] \ar@{.}[rr] &&2 \ar[dl] \ar[ul]\\
6 \ar@{.}[rr] &&4 \ar[ul] \ar@{.}[rr] &&1 \ar[ul]}$$
with mesh relations. The AR-quiver of $\B:=\mod\Lambda$ is given by
$$\xymatrix@C=0.3cm@R0.3cm{
&&{\begin{smallmatrix}
3&&\\
&5&\\
&&6
\end{smallmatrix}} \ar[dr] &&&&&&{\begin{smallmatrix}
1&&\\
&2&\\
&&3
\end{smallmatrix}} \ar[dr]\\
&{\begin{smallmatrix}
5&&\\
&6&
\end{smallmatrix}} \ar[ur] \ar@{.}[rr] \ar[dr] &&{\begin{smallmatrix}
3&&\\
&5&
\end{smallmatrix}} \ar@{.}[rr] \ar[dr] &&{\begin{smallmatrix}
4
\end{smallmatrix}} \ar@{.}[rr] \ar[dr] &&{\begin{smallmatrix}
2&&\\
&3&
\end{smallmatrix}} \ar[ur] \ar@{.}[rr] \ar[dr] &&{\begin{smallmatrix}
1&&\\
&2&
\end{smallmatrix}} \ar[dr]\\
{\begin{smallmatrix}
6
\end{smallmatrix}} \ar[ur] \ar@{.}[rr] &&{\begin{smallmatrix}
5
\end{smallmatrix}} \ar[ur] \ar@{.}[rr] \ar[dr] &&{\begin{smallmatrix}
3&&4\\
&5&
\end{smallmatrix}} \ar[ur] \ar[r] \ar[dr] \ar@{.}@/^15pt/[rr] &{\begin{smallmatrix}
&2&\\
3&&4\\
&5&
\end{smallmatrix}} \ar[r] &{\begin{smallmatrix}
&2&\\
3&&4
\end{smallmatrix}} \ar[ur] \ar@{.}[rr] \ar[dr] &&{\begin{smallmatrix}
2
\end{smallmatrix}} \ar[ur] \ar@{.}[rr] &&{\begin{smallmatrix}
1
\end{smallmatrix}}.\\
&&&{\begin{smallmatrix}
4&&\\
&5&
\end{smallmatrix}} \ar[ur] \ar@{.}[rr] &&{\begin{smallmatrix}
3
\end{smallmatrix}} \ar[ur] \ar@{.}[rr] &&{\begin{smallmatrix}
2&&\\
&4&
\end{smallmatrix}} \ar[ur]
}$$
We denote by ``$\circ$" in the AR-quiver the indecomposable objects belong to a subcategory. Let
$$\xymatrix@C=0.4cm@R0.4cm{
&&&\circ \ar[dr] &&&&&&\circ \ar[dr]\\
{\M:} &&\circ \ar[ur]  \ar[dr] &&\cdot  \ar[dr] &&\circ  \ar[dr] &&\cdot  \ar[ur]  \ar[dr] &&\circ \ar[dr]\\
&\circ \ar[ur]  &&\cdot \ar[ur]  \ar[dr] &&\cdot \ar[ur] \ar[r] \ar[dr] &\circ \ar[r] &\cdot \ar[ur] \ar[dr] &&\cdot \ar[ur] &&\circ\\
&&&&\circ \ar[ur] &&\cdot \ar[ur] &&\circ \ar[ur]
}
$$
$\M$ is a cluster tilting subcategory of $\B$. Then $\B/\M\simeq \mod \underline {\Omega \M}$ and its quiver is the following:
 $$\xymatrix@C=0.4cm@R0.4cm{
&{\begin{smallmatrix}
3&\ \\
&5
\end{smallmatrix}} \ar[dr]
&&&&{\begin{smallmatrix}
2&\ \\
&3
\end{smallmatrix}}\ar[dr]\\
{\begin{smallmatrix}
\ &5&\
\end{smallmatrix}} \ar@{.}[rr] \ar[ur]
&&{\begin{smallmatrix}
3&&4\ \\
&5&
\end{smallmatrix}}\ar[dr] \ar@{.}[rr]
&&{\begin{smallmatrix}
&2&\ \\
3&&4
\end{smallmatrix}}\ar[ur] \ar@{.}[rr]
&&{\begin{smallmatrix}
\ &2&\
\end{smallmatrix}}\\
&&&{\begin{smallmatrix}
\ &3&\
\end{smallmatrix}} \ar[ur]}$$
It is not $1$-Gorenstein any more, the non-projective injective object ${\begin{smallmatrix}
 2
\end{smallmatrix}}$ have projective dimension $3$.
\end{exm}

We denote by $\pd_{\B}(X)$ the projective dimension of $X$ in $\B$ and by $\pd_{\oB}(X)$ (resp. $\id_{\oB}(X)$) the projective (resp. injective) dimension of $X$ in $\oB$, we prove the following theorem which will give an "if and only if" condition for $\oB$ being $1$-Gorenstein.

\begin{thm}\label{Gro}
Let $X\in \h_\K$, then
\begin{itemize}
\item[(a)] 
$\pd_{\oB}(X)\leq 1$ if and only if for the short exact sequence $\xymatrix{X \ar@{ >->}[r]^x &C^1 \ar@{->>}[r] &C^2}$ where $C^1,C^2,\in \C$, morphism $\overline{\Omega x}=0$.
\item[(b)] $\id_{\oB}(X)\leq 1$ if and only if for the short exact sequence $\xymatrix{D_2 \ar@{ >->}[r] &D_1 \ar@{->>}[r]^y &Y}$
where $D_1,D_2,\in \D$, morphism $\overline {\Sigma y}=0$.
\end{itemize}
\end{thm}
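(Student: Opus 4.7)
The plan is to extract, from the given short exact sequence $X\rightarrowtail C^1\twoheadrightarrow C^2$, a four-term exact sequence in $\oB$
\[
\overline{\Omega X}\xrightarrow{\overline{\Omega x}}\overline{\Omega C^1}\xrightarrow{\overline{\Omega p}}\overline{\Omega C^2}\xrightarrow{\overline{\alpha}}\overline{X}\to 0,
\]
whose last three terms form the start of a projective resolution of $\overline{X}$ (the $\overline{\Omega C^i}$ being projective in $\oB$ by Proposition~\ref{eq}(b)), and then to translate $\pd_{\oB}(X)\le 1$ into vanishing of the leftmost map. Part~(b) will follow by the dual argument, using the cotorsion pair $(\K,\D)$ in place of $(\C,\K)$ and Proposition~\ref{inj} to identify $(\Sigma\D)/\mathcal I$ as the enough injectives of $\oB$.

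To build the rightmost piece I pull back a projective cover $\Omega C^2\rightarrowtail P_2\twoheadrightarrow C^2$ along $C^1\twoheadrightarrow C^2$, obtaining an object $Y$ fitting in two short exact sequences $X\rightarrowtail Y\twoheadrightarrow P_2$ and $\Omega C^2\rightarrowtail Y\twoheadrightarrow C^1$. The first splits because $P_2\in\mathcal P$, giving $Y\cong X\oplus P_2$; let $\alpha\colon \Omega C^2\to X$ denote the $X$-component of $\Omega C^2\hookrightarrow Y$. Applying the half-exact functor $\pi=H\colon \B\to \oB$ to $\Omega C^2\rightarrowtail Y\twoheadrightarrow C^1$, and noting that $\pi(C^1)=0$ since $\C\subseteq \K$, produces the surjection $\overline{\alpha}\colon \overline{\Omega C^2}\twoheadrightarrow \overline{X}$. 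The middle differential comes from the horseshoe lemma applied to $X\rightarrowtail C^1\twoheadrightarrow C^2$ with covers $P_X\twoheadrightarrow X$ and $P_2\twoheadrightarrow C^2$: it yields a short exact sequence $\Omega X\rightarrowtail \Omega C^1\twoheadrightarrow \Omega C^2$ in $\B$, with $P_1=P_X\oplus P_2$, whose first map is $\Omega x$ and whose second is $\Omega p$. Half-exactness of $\pi$ then gives exactness of $\overline{\Omega X}\to \overline{\Omega C^1}\to \overline{\Omega C^2}$ at the middle term; moreover, the composite $\overline{\alpha}\circ \overline{\Omega p}$ vanishes because in $\B$ the map $\Omega C^1\to \Omega C^2\hookrightarrow Y$ factors through $P_1\in \mathcal P\subseteq \K$. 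Exactness at $\overline{\Omega C^2}$, i.e.\ $\ker\overline{\alpha}=\Im(\overline{\Omega p})$, follows either by a $3\times 3$ diagram chase on the pullback or by matching the complex with the canonical projective presentation of $\overline{X}$ in $\mod\underline{\Omega\C}\simeq \oB$ provided by Proposition~\ref{eq}(a).

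With the four-term exact sequence in hand, $\pd_{\oB}(X)\le 1$ is equivalent to $\ker\overline{\alpha}=\Im(\overline{\Omega p})\cong \overline{\Omega C^1}/\Im(\overline{\Omega x})$ being projective in $\oB$. If $\overline{\Omega x}=0$, then $\overline{\Omega p}$ is injective by exactness at $\overline{\Omega C^1}$, so $\Im(\overline{\Omega p})\cong \overline{\Omega C^1}$ is projective. Conversely, projectivity of the quotient splits the surjection $\overline{\Omega C^1}\twoheadrightarrow \Im(\overline{\Omega p})$ and displays $\Im(\overline{\Omega x})$ as a direct summand of the projective $\overline{\Omega C^1}$. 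Choosing $C^1$ and $C^2$ so that $\overline{\alpha}$ is a \emph{minimal} projective cover of $\overline{X}$ (possible because $\oB$ is Krull--Schmidt), the subobject $\ker\overline{\Omega p}=\Im(\overline{\Omega x})$ must lie in the radical of $\overline{\Omega C^1}$; a direct summand contained in the radical must vanish, so $\overline{\Omega x}=0$.

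The main obstacles are (i) verifying exactness at $\overline{\Omega C^2}$, which requires a careful pullback chase in $\B$ or identification with the canonical presentation from Proposition~\ref{eq}(a); and (ii) the minimality argument in the converse direction, where the Krull--Schmidt property of $\B$ is needed to conclude that a direct summand of a projective sitting inside its radical must be zero.
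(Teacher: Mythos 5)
Your construction of the four-term exact sequence $\overline{\Omega X}\xrightarrow{\overline{\Omega x}}\overline{\Omega C^1}\to\overline{\Omega C^2}\to\overline X\to 0$ and your proof of the ``if'' direction coincide with the paper's argument (the diagram marked $\maltese$): pull back a projective presentation of $C^2$, split off $P_2$, identify the connecting map with $-\Omega x$, and read off a length-one projective resolution when $\overline{\Omega x}=0$. That half is fine.

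The converse contains a genuine gap, and it sits exactly where the paper spends most of its effort. Your radical argument requires that $\ker(\overline{\Omega p})=\Im(\overline{\Omega x})$ be contained in $\operatorname{rad}(\overline{\Omega C^1})$, i.e.\ that $\overline{\Omega p}\colon\overline{\Omega C^1}\to\Im(\overline{\Omega p})$ be a projective cover --- that is, right minimality of the \emph{middle} differential $\overline b=\overline{\Omega p}$, not of $\overline\alpha$. Arranging $\overline\alpha$ to be a minimal cover of $\overline X$ does not give this, and you cannot simply ``choose'' the presentation minimal at the $\overline{\Omega C^1}$ spot: you would have to show that such a minimal presentation in $\oB$ is realized by an actual conflation $X\rightarrowtail C^1\twoheadrightarrow C^2$ in $\B$, and moreover the theorem (as invoked in Corollary \ref{Gor}) asserts $\overline{\Omega x}=0$ for \emph{every} such conflation, not for one favourable choice, so you would still owe an argument that the vanishing is independent of the choice. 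The paper closes precisely this hole in its Step (1): it proves that for $X\in\h_\K$ the deflation $d\colon C^1\twoheadrightarrow C^2$ in an arbitrary such conflation is automatically right minimal (any killed summand of $C^1$ would be a direct summand of $X$ lying in $\C\subseteq\K$, contradicting $X\in\h_\K$), and then carefully transfers this minimality first to $\underline d$ and then, through the syzygy diagrams, to $\underline b$. Once you have right minimality of $\underline b$, your splitting-plus-radical argument (or the paper's epic--monic factorization in its Step (2)) does finish the proof; without it, the key step ``a direct summand contained in the radical must vanish'' has nothing to apply to. Supplying the minimality of $\underline b$ for an arbitrary conflation is therefore the missing ingredient you need to add.
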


\begin{proof}
We only prove (a), (b) is by dual.\\
We have the following commutative diagram
$$\xymatrix{
&\Omega C^1 \ar@{=}[r] \ar@{ >->}[d]^-{\svecv{0}{1}} &\Omega C^1 \ar@{=}[r] \ar@{ >->}[d]^b &\Omega C^1\ar@{ >->}[d]^{p_1}\\
\Omega X \ar@{ >->}[r]^-{\svecv{p}{a}} \ar@{=}[d] &P_X\oplus \Omega C^1 \ar@{->>}[r]^-{\svech{-b'}{b}} \ar@{->>}[d]^-{\svech{1}{0}} &\Omega C^2 \ar@{ >->}[r]^{p_2} \ar@{->>}[d]^c &P\ar@{->>}[r]^{q_2} \ar@{->>}[d]^{q_1} &C^2 \ar@{=}[d] &({\maltese}) \\
\Omega X \ar@{ >->}[r]^p  &P_X \ar@{->>}[r]^q  &X \ar@{ >->}[r]^x  &C^1  \ar@{->>}[r]^d &C^2
}$$
It induces an exact sequence $\Omega X \xrightarrow{\overline a} \Omega C^1 \xrightarrow{\overline b} \Omega C^2 \xrightarrow{\overline c} X\to 0$ in $\oB$. Moreover, we have the following commutative diagram
$$\xymatrix{
\Omega X \ar[d]_{-a} \ar@{ >->}[r]^p &P_X \ar[d]^{-p_2b'} \ar@{->>}[r]^q &X \ar[d]^x\\
\Omega C \ar@{ >->}[r]_{p_1} &P \ar@{->>}[r]_{q_1} &C^1.
}
$$
It is not hard to check that $-\underline a=\Omega x$, then $-\overline a=\overline {\Omega x}$.
If $-\overline a=\overline {\Omega x}=0$, $X$ admits a short exact sequence $0\to \Omega C^1 \to \Omega C^2 \to X \to 0$ in $\oB$,
hence $\pd_{\oB}(X)\leq 1$.\\
Now we prove the "only if" part, The proof is divided into two steps.\\
(1) We show that in $({\maltese})$, $\underline b$ is right minimal.\\
We claim that $d$ is right minimal, otherwise $d$ can be written as $C^1_1\oplus C^1_2\xrightarrow{\svech{d_1}{0}} C^2$, this implies that $C^1_2$ is a direct summand of $X$. But $X\in \h_\K$, a contradiction. We claim that $\underline d$ is also right minimal. Since if we have a morphism $C^1\xrightarrow{c^1} C^1$ such that $\underline {dc^1}=\underline d$, then we have $d(c^1-1_{C^1}):C^1 \xrightarrow{p^1} P'\xrightarrow{p^2} C^2$ where $P'\in \mathcal P$. Morphism $p^2$ factors through $d$, we have $p^2:P'\xrightarrow{p^3} C^1\xrightarrow{d} C^2$, hence $d(c^1-1_{C^1})=dp^3p^1$, then $d(c^1-p^3p^1)=d$, which implies $c^1-p^3p^1$ is an isomorphism, hence $\underline {c^1}$ is an isomorphism. Hence $\underline b$ is right minimal. Let $b_1:\Omega C^1\to \Omega C^1$ be a morphism such that $\underline {bb_1}=\underline b$, then we have the following commutative diagram:
$$\xymatrix{
\Omega C^1 \ar@{ >->}[r]^{p_1} \ar[d]^{b_1} &P \ar@{->>}[r]^{q_1} \ar[d] &C^1 \ar[d]^{c_1}\\
\Omega C^1 \ar@{ >->}[r]^{p_1} \ar[d]^{b} &P \ar@{->>}[r]^{q_1} \ar@{=}[d] &C^1 \ar[d]^d\\
\Omega C^2 \ar@{ >->}[r]^{p_2}  &P \ar@{->>}[r]^{q_2} &C^2.
}
$$
Since $\underline {bb_1}=\underline b$, $bb_1-b$ factors through $p_1$, this implies $dc_1-d$ factors though $q_2$. Hence $\underline {dc_1}=\underline d$, which means $\underline {c_1}$ is an isomorphism. Let $\underline {c_2}$ be the inverse of $\underline {c_1}$, we have the following commutative diagram:
$$\xymatrix{
\Omega C^1 \ar@{ >->}[r]^{p_1} \ar[d]^{b_1} &P \ar@{->>}[r]^{q_1} \ar[d] &C^1 \ar[d]^{c_1}\\
\Omega C^1 \ar@{ >->}[r]^{p_1} \ar[d]^{b_2} &P \ar@{->>}[r]^{q_1} \ar[d] &C^1 \ar[d]^{c_2}\\
\Omega C^1 \ar@{ >->}[r]^{p_1}  &P \ar@{->>}[r]^{q_1} &C^1.
}
$$
Since $1_{C^1}-c_2c_1$ factors through $\mathcal P$, then it factors through $q_1$. Thus $1_{\Omega C^1}-b_2b_1$ factors through $p^1$, we have $\underline {b_2b_1}=\underline {1_{\Omega C^1}}$. By the similar argument we can find another morphism $b_2':\Omega C^1\to \Omega C^1$ such that $\underline {b_1b_2'}=\underline {1_{\Omega C^1}}$. Hence $\underline {b_1}$ is an isomorphism and $\underline b$ is right minimal.\\
(2) We show $\overline a=0$.\\
For any object $X\in  (\Sigma \D)_{\Omega \C}\cap \h_\K$, we have $\overline b$ and $\overline c$ are non-zero. Assume $\overline a\neq 0$, then we have the following exact sequence:
$$\xymatrix@C=0.5cm@R0.4cm{\Omega X \ar[rr]^{\overline a} \ar@{->>}[dr]_{\overline {r_4}} &&\Omega C^1 \ar[rr]^{\overline b} \ar@{->>}[dr]_{\overline {r_2}} &&\Omega C^2 \ar[r]^-{\overline c} &X \ar[r] &0\\
&R_2\ar@{ >->}[ur]_{\overline {r_3}}  &&R_1 \ar@{ >->}[ur]_{\overline {r_1}}}
$$
where $\Omega C^1 \xrightarrow{\overline {r_2}} R_1\xrightarrow{\overline {r_1}} \Omega C^2$ is an epic-monic factorization of $\overline b$ and $\Omega X \xrightarrow{\overline {r_4}} R_2\xrightarrow{\overline {r_3}} \Omega C^1$ is an epic-monic factorization of $\overline a$. Since $\oB$ is $1$-Gorenstein, $R_1\in \Omega \C$, hence we have a split short exact sequence $0\to R_2\xrightarrow{\overline {r_3}} \Omega C^1 \xrightarrow{\overline {r_2}} R_1\to 0$ which implies $R_2\in \Omega \C$. Thus $R_2$ is a direct summand of $\Omega X$. Then short exact sequence $\xymatrix{\Omega X \ar@{ >->}[r]^-{\svecv{a'}{a}} &P_X\oplus \Omega C^1 \ar@{->>}[r]^-{\svech{-b'}{b}} &\Omega C^2}$ has the following form
$$\xymatrix{S\oplus R_2\ar@{ >->}[r]^-{\left(\begin{smallmatrix}
a_{11}&a_{12}\\
a_{21}&a_{22}
\end{smallmatrix}\right)} &T\oplus R_2 \ar[r]^-{\svech{t}{r}} &\Omega C^2}$$
where $a_{22}$ is an isomorphism. Note that $\svech{\underline t}{\underline r}=\underline b$ is right minimal. We have an isomorphism $$T\oplus R_2 \xrightarrow{\left(\begin{smallmatrix}
1_T&a_{12}\\
0&a_{22}
\end{smallmatrix}\right)} T\oplus R_2$$ such that $\svech{t}{r}\left(\begin{smallmatrix}
1_T&a_{12}\\
0&a_{22}
\end{smallmatrix}\right)=\svech{t}{0}.$ But $\underline b$ is right minimal, this implies $R_2\in \mathcal P$, then $\overline a=0$, a contradiction. Hence $\overline a=0=\overline {\Omega x}$.

\end{proof}

We get the following corollary immediately.

\begin{cor}\label{Gor}
$\oB$ is $1$-Gorenstein if and only if
\begin{itemize}
\item for any object $X\in (\Sigma \D)_{\Omega \C}\cap \h_\K$ and short exact sequence $\xymatrix{X \ar@{ >->}[r]^x &C^1 \ar@{->>}[r] &C^2}$ where $C^1,C^2,\in \C$, morphism $\overline {\Omega x}=0$;
\item for any object $Y\in (\Omega \C)_{\Sigma \D}\cap \h_\K$ and short exact sequence $\xymatrix{D_2 \ar@{ >->}[r] &D_1 \ar@{->>}[r]^y &Y}$ where $D_1,D_2,\in \D$, morphism $\overline {\Sigma y}=0$.
\end{itemize}
\end{cor}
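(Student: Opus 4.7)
The plan is to derive this as a direct consequence of Theorem~\ref{Gro} by translating the global $1$-Gorenstein condition into its pointwise form object by object. Recall that an abelian category with enough projectives and injectives is $1$-Gorenstein precisely when every injective object has projective dimension at most one and every projective object has injective dimension at most one; this standard characterization applies here because $\oB \simeq \mod\underline{\C}$ is a module category by Proposition~\ref{eq}.

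By Proposition~\ref{inj}, the injectives of $\oB$ are additively generated by $(\Sigma\D)/\I$, and by Proposition~\ref{eq} the projectives are additively generated by $\underline{\Omega\C}$. Since $\oB$ is Krull-Schmidt, I may split each injective object as a sum of indecomposables; a summand that is simultaneously projective contributes $\pd \le 0$ automatically, so the nontrivial content is carried by the indecomposable injective summands that have no projective summand, i.e.\ representatives in $(\Sigma\D)_{\Omega\C}$. Using the equivalence $\oB \simeq \h/\C$, such representatives may be lifted into $\h$, and after killing any direct summand that lies in $\K$ (which becomes zero in $\oB$) the lift lies in $\h_\K$. A dual argument for projectives produces representatives in $(\Omega\C)_{\Sigma\D}\cap \h_\K$.

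With this reduction, Theorem~\ref{Gro}(a) converts ``every injective of $\oB$ has $\pd_{\oB}\le 1$'' into condition (a) of the corollary, and Theorem~\ref{Gro}(b) converts ``every projective of $\oB$ has $\id_{\oB}\le 1$'' into condition (b); conjoining yields the biconditional. The main obstacle -- more a matter of bookkeeping than of depth -- is verifying that a non-projective injective of $\oB$ can indeed be represented by an object in $(\Sigma\D)_{\Omega\C}\cap \h_\K$, and symmetrically for projectives. This uses Krull-Schmidt together with the dual of Lemma~\ref{summand2} and the half-exactness of the quotient functor $\pi\colon \B\to \oB$ observed in the proof of Proposition~\ref{eq}, which together guarantee that the summand decomposition in $\oB$ is compatible with a decomposition of the chosen lift in $\h$.
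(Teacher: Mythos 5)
Your proposal is correct and is exactly the argument the paper leaves implicit when it states the corollary follows ``immediately'' from Theorem~\ref{Gro}: identify the injectives and projectives of $\oB$ via Proposition~\ref{inj} and Proposition~\ref{eq}, reduce by Krull--Schmidt to indecomposable non-projective injective (resp.\ non-injective projective) representatives in $(\Sigma\D)_{\Omega\C}\cap\h_\K$ (resp.\ $(\Omega\C)_{\Sigma\D}\cap\h_\K$), and apply the theorem pointwise. The bookkeeping you flag (compatibility of summand decompositions with lifts to $\h$, using the dual of Lemma~\ref{summand2}) is the right thing to check and poses no obstruction.
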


\begin{rem}
The following statement is also useful:
\begin{itemize}
\item If $\Omega (\Sigma \D)\subseteq \K$ and $\Sigma (\Omega \C)\subseteq \K$, then $\oB$ is $1$-Gorenstein.
\end{itemize}
The condition "$\Omega (\Sigma \D)\subseteq \K$ and $\Sigma (\Omega \C)\subseteq \K$" is very closed to what happens on triangulated category. It is automatically satisfied if $\B$ is Frobenius, since $\Omega (\Sigma \D)=\D\subseteq \K$ and $\Sigma (\Omega \C)=\C\subseteq \K$. In fact, it is a sufficient condition for $\oB$ being $1$-Gorenstein (a special case of Theorem \ref{Gro}), but not a necessary condition. Let's see an example.
\end{rem}

\begin{exm}
Let $\Lambda$ be the $k$-algebra given by the quiver
$$\xymatrix@C=0.4cm@R0.4cm{
&&1 \ar[dll]_x \\
2 \ar[dr]_x &&&&5 \ar[ull]_x\\
&3 \ar[rr]_y &&4 \ar[ur]_x
}
$$
with relations $x^4=0, x^3y=0,yx^3=0, xyx=0$, the AR-quiver of $\B:=\mod\Lambda$ is given by:
$$\xymatrix@C=0.3cm@R0.3cm{
&{\begin{smallmatrix}
4\ \\
5\ \\
1\ \\
2\
\end{smallmatrix}} \ar[dr]
&&{\begin{smallmatrix}
3\ \\
4\ \\
5\ \\
1\
\end{smallmatrix}} \ar[dr]
&&&& {\begin{smallmatrix}
1\ \\
2\ \\
3\ \\
4\
\end{smallmatrix}} \ar[dr]
&&{\begin{smallmatrix}
5\ \\
1\ \\
2\ \\
3\
\end{smallmatrix}} \ar[dr]\\
{\begin{smallmatrix}
5\ \\
1\ \\
2\
\end{smallmatrix}} \ar[dr] \ar[ur] \ar@{.}[rr]
&&{\begin{smallmatrix}
4\ \\
5\ \\
1\
\end{smallmatrix}} \ar[dr] \ar[ur] \ar@{.}[rr]
&&{\begin{smallmatrix}
3\ \\
4\ \\
5\
\end{smallmatrix}} \ar[dr]
&&{\begin{smallmatrix}
2\ \\
3\ \\
4\
\end{smallmatrix}} \ar[dr] \ar[ur] \ar@{.}[rr]
&&{\begin{smallmatrix}
1\ \\
2\ \\
3\
\end{smallmatrix}} \ar[dr] \ar[ur] \ar@{.}[rr]
&&{\begin{smallmatrix}
5\ \\
1\ \\
2\
\end{smallmatrix}}\\
\ar@{.}[r] &{\begin{smallmatrix}
5\ \\
1\
\end{smallmatrix}} \ar[dr] \ar[ur] \ar@{.}[rr]
&&{\begin{smallmatrix}
4\ \\
5\
\end{smallmatrix}} \ar[dr] \ar[ur] \ar@{.}[rr]
&&{\begin{smallmatrix}
3\ \\
4\
\end{smallmatrix}} \ar[dr] \ar[ur] \ar@{.}[rr]
&&{\begin{smallmatrix}
2\ \\
3\
\end{smallmatrix}} \ar[dr] \ar[ur] \ar@{.}[rr]
&&{\begin{smallmatrix}
1\ \\
2\
\end{smallmatrix}} \ar[dr] \ar[ur] \ar@{.}[r] &\\
{\begin{smallmatrix}
1
\end{smallmatrix}} \ar[ur] \ar@{.}[rr]
&&{\begin{smallmatrix}
5
\end{smallmatrix}} \ar[ur] \ar@{.}[rr]
&&{\begin{smallmatrix}
4
\end{smallmatrix}} \ar[ur] \ar@{.}[rr]
&&{\begin{smallmatrix}
3
\end{smallmatrix}} \ar[ur] \ar@{.}[rr]
&&{\begin{smallmatrix}
2
\end{smallmatrix}} \ar[ur] \ar@{.}[rr]
&&{\begin{smallmatrix}
1
\end{smallmatrix}}
}
$$
The first column and the last column are identical. We denote by ``$\circ$" in the AR-quiver the indecomposable objects belong to a subcategory. Let
$$\xymatrix@C=0.3cm@R0.3cm{
&& \circ \ar[dr]
&& \circ \ar[dr]
&&&& \circ\ar[dr]
&& \circ \ar[dr]\\
\C: &\cdot \ar[dr] \ar[ur]
&&\cdot \ar[dr] \ar[ur]
&&\cdot \ar[dr]
&& \circ \ar[dr] \ar[ur]
&& \cdot \ar[dr] \ar[ur]
&& \cdot \\
&&\cdot \ar[dr] \ar[ur]
&& \cdot \ar[dr] \ar[ur]
&& \cdot \ar[dr] \ar[ur]
&& \circ \ar[dr] \ar[ur]
&& \cdot \ar[dr] \ar[ur]\\
&\cdot \ar[ur]
&& \cdot \ar[ur]
&& \cdot \ar[ur]
&& \cdot \ar[ur]
&& \circ \ar[ur]
&&\cdot
}
$$
$\C$ is a fully rigid subcategory of $\B$, since we have
$$\xymatrix@C=0.3cm@R0.3cm{
&& \circ \ar[dr]
&& \circ \ar[dr]
&&&& \circ\ar[dr]
&& \circ \ar[dr]\\
\K: &\circ \ar[dr] \ar[ur]
&&\circ \ar[dr] \ar[ur]
&&\circ \ar[dr]
&& \circ \ar[dr] \ar[ur]
&& \circ \ar[dr] \ar[ur]
&& \circ \\
&&\circ \ar[dr] \ar[ur]
&& \circ\ar[dr] \ar[ur]
&& \cdot \ar[dr] \ar[ur]
&& \circ \ar[dr] \ar[ur]
&& \circ \ar[dr] \ar[ur]\\
&\circ \ar[ur]
&& \circ \ar[ur]
&& \cdot \ar[ur]
&& \cdot \ar[ur]
&& \circ \ar[ur]
&&\circ
}
$$
In this case, we have $\Omega (\Sigma \D)\subseteq \K$ and $\Sigma (\Omega \C)\subseteq \K$.
Let
$$\xymatrix@C=0.3cm@R0.3cm{
&& \circ \ar[dr]
&& \circ \ar[dr]
&&&& \circ\ar[dr]
&& \circ \ar[dr]\\
\M: &\cdot \ar[dr] \ar[ur]
&&\cdot \ar[dr] \ar[ur]
&&\circ \ar[dr]
&& \circ \ar[dr] \ar[ur]
&& \cdot \ar[dr] \ar[ur]
&& \cdot \\
&&\cdot \ar[dr] \ar[ur]
&& \circ \ar[dr] \ar[ur]
&& \cdot \ar[dr] \ar[ur]
&& \circ \ar[dr] \ar[ur]
&& \cdot \ar[dr] \ar[ur]\\
&\cdot \ar[ur]
&& \circ \ar[ur]
&& \cdot \ar[ur]
&& \cdot \ar[ur]
&& \circ \ar[ur]
&&\cdot
}
$$
$\M$ is a cluster tilting subcategory and $\B/\M$ is $1$-Gorenstein, it satisfies the condition in Theorem \ref{Gro}, but we have the following short exact sequences:
$$
\xymatrix{ {\begin{smallmatrix}
4
\end{smallmatrix}} \ar@{ >->}[r] & {\begin{smallmatrix}
2\ \\
3\ \\
4\
\end{smallmatrix}} \ar@{->>}[r] &{\begin{smallmatrix}
2\ \\
3\
\end{smallmatrix}},\\
}
\quad
\xymatrix{ {\begin{smallmatrix}
4
\end{smallmatrix}} \ar@{ >->}[r] & {\begin{smallmatrix}
1\ \\
2\ \\
3\ \\
4\
\end{smallmatrix}} \ar@{->>}[r] &{\begin{smallmatrix}
1\ \\
2\ \\
3\
\end{smallmatrix}}
}$$
where $ {\begin{smallmatrix}
4
\end{smallmatrix}}$ is projective in $\B/\M$, ${\begin{smallmatrix}
2\ \\
3\ \\
4\
\end{smallmatrix}}$ is a non-injective projective object, ${\begin{smallmatrix}
2\ \\
3\
\end{smallmatrix}}\in \M$, ${\begin{smallmatrix}
1\ \\
2\ \\
3\ \\
4\
\end{smallmatrix}}$ is an injective object in $\B$ and ${\begin{smallmatrix}
1\ \\
2\ \\
3\
\end{smallmatrix}}\in \B_\M$.
\end{exm}

\section{fully rigid $d$-cluster tilting subcategories}

Cluster tiling subcategories are fully rigid, but this is not always true for $d$-cluster tilting subcategories when $d\geq 3$.

\begin{exm}\label{ex2}
This example is from \cite{V}.\\
Let $\Lambda$ be the algebra given by the following quiver with relations:
$$\xymatrix@C=0.7cm@R0.2cm{
&\\
-3 \ar[r] \ar@{.}@/^15pt/[rr] &-2 \ar@{.}@/_6pt/[drr]\ar[r] &-1 \ar@{.}@/^8pt/[drrr]\ar[dr] &&&&&&5 \ar[r] \ar@{.}@/^15pt/[rr] &6 \ar[r] &7\\
&&&0 \ar[r] \ar@{.}@/^18pt/[rrr] & 1 \ar[r] \ar@{.}@/^18pt/[rrr] &2 \ar[r] \ar@{.}@/^8pt/[urrr] & 3 \ar[r] \ar@{.}@/_8pt/[drr] &4 \ar[ur] \ar[dr] \ar@{.}@/_6pt/[urr] \ar@{.}@/^6pt/[drr] \\
&-5 \ar[r] \ar@{.}@/^6pt/[urr] &-4 \ar[ur] \ar@{.}@/_8pt/[urr] &&&&&&8 \ar[r] &9
}
$$
There is a $3$-cluster tilting subcategory $\C$ in $\B=\mod \Lambda$:
$$\xymatrix@C=0.2cm@R0.2cm{
&&&\circ \ar[dr] &&&&\circ \ar[dr] &&\circ \ar[dr] &&\circ \ar[dr] &&\circ \ar[dr] &&\circ \ar[dr] &&&&\circ \ar[dr]\\
&\C: &\circ \ar@{.}[rr] \ar[ur] &&\cdot \ar[dr] \ar@{.}[rr] &&\cdot \ar[dr] \ar[ur] \ar@{.}[rr] &&\cdot \ar[dr] \ar[ur] \ar@{.}[rr] &&\cdot \ar[dr] \ar[ur] \ar@{.}[rr] &&\cdot \ar[dr] \ar[ur] \ar@{.}[rr] &&\cdot \ar[dr] \ar[ur] \ar@{.}[rr] &&\cdot \ar[dr] \ar@{.}[rr] &&\cdot  \ar[ur] \ar@{.}[rr] &&\circ\\
&&&&&\circ \ar[dr] \ar[ur] \ar@{.}[rr] &&\cdot \ar[ur] \ar@{.}[rr] &&\circ \ar[ur] \ar@{.}[rr] &&\cdot \ar[ur] \ar@{.}[rr] &&\circ \ar[ur] \ar@{.}[rr] &&\cdot \ar[dr] \ar[ur] \ar@{.}[rr] &&\circ \ar[dr] \ar[ur]\\
\circ \ar@{.}[rr] \ar[dr] &&\cdot \ar@{.}[rr] \ar[dr] &&\cdot \ar[ur] \ar@{.}[rr] &&\circ &&&&&&&&&&\circ \ar[ur] \ar@{.}[rr] &&\cdot \ar[dr] \ar@{.}[rr] &&\cdot \ar[dr] \ar@{.}[rr] &&\circ\\
&\circ \ar[ur] &&\circ \ar[ur] &&&&&&&&&&&&&&&&\circ \ar[ur] && \circ \ar[ur]
}
$$
$\C$ is not fully rigid since we have
$$\xymatrix@C=0.2cm@R0.2cm{
&&&\circ \ar[dr] &&&&\circ \ar[dr] &&\circ \ar[dr] &&\circ \ar[dr] &&\circ \ar[dr] &&\circ \ar[dr] &&&&\circ \ar[dr]\\
&\K: &\circ \ar@{.}[rr] \ar[ur] &&\circ \ar[dr] \ar@{.}[rr] &&\cdot \ar[dr] \ar[ur] \ar@{.}[rr] &&\circ \ar[dr] \ar[ur] \ar@{.}[rr] &&\cdot \ar[dr] \ar[ur] \ar@{.}[rr] &&\circ \ar[dr] \ar[ur] \ar@{.}[rr] &&\cdot \ar[dr] \ar[ur] \ar@{.}[rr] &&\circ \ar[dr] \ar@{.}[rr] &&\cdot  \ar[ur] \ar@{.}[rr] &&\circ\\
&&&&&\circ \ar[dr] \ar[ur] \ar@{.}[rr] &&\cdot \ar[ur] \ar@{.}[rr] &&\circ \ar[ur] \ar@{.}[rr] &&\cdot \ar[ur] \ar@{.}[rr] &&\circ \ar[ur] \ar@{.}[rr] &&\cdot \ar[dr] \ar[ur] \ar@{.}[rr] &&\circ \ar[dr] \ar[ur]\\
\circ \ar@{.}[rr] \ar[dr] &&\circ \ar@{.}[rr] \ar[dr] &&\cdot \ar[ur] \ar@{.}[rr] &&\circ &&&&&&&&&&\circ \ar[ur] \ar@{.}[rr] &&\circ \ar[dr] \ar@{.}[rr] &&\cdot \ar[dr] \ar@{.}[rr] &&\circ\\
&\circ \ar[ur] &&\circ \ar[ur] &&&&&&&&&&&&&&&&\circ \ar[ur] && \circ \ar[ur]
}
$$
We can find that $\oB$ is not abelian.
\end{exm}

An natural question is when $d$-cluster tilting subcategories become fully rigid. In this section, we show the following theorem:

\begin{thm}\label{main}
Let $\C$ be a $d$-cluster tilting subcategory of $\B$, $d\geq 3$. $\C$ is fully rigid if and only if $\Omega (\h_{\K})\subseteq \K$.

\end{thm}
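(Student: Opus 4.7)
The plan is to establish each implication separately, using the $d$-cluster tilting vanishing of Ext groups combined with a dimension-shift argument for the forward direction and a minimality argument for a $\C$-approximation in the backward direction.

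For the forward direction, assume $\C$ is fully rigid and fix $X\in\h_\K$. Using that $\h$ is closed under summands (Lemma \ref{summand2}) and the defining property of $\h_\K$, I may reduce to $X$ indecomposable. From the heart witness $X\rightarrowtail C^0\twoheadrightarrow C^1$ with $C^0,C^1\in\C$, together with $\Ext^i(\C,\C)=0$ for $1\le i\le d-1$, the long exact sequences yield both $\Ext^1(X,\mathcal P)=0$ and $\Ext^j(\C,X)=0$ for $2\le j\le d-1$ (this is where $d\ge 3$ matters). The first vanishing makes Lemma \ref{ind} applicable to the minimal projective cover, so $\Omega X$ is indecomposable. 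By full rigidity, $\Omega X\in\K$ (the desired conclusion) or $\Omega X\in\h$. Assuming the latter and applying the same analysis to $\Omega X$ gives $\Ext^2(\C,\Omega X)=0$; but dimension-shifting along $\Omega X\rightarrowtail P\twoheadrightarrow X$ with $P\in\mathcal P$ (using $\Ext^i(\C,P)=0$ for $i\ge1$) produces an isomorphism $\Ext^2(\C,\Omega X)\cong\Ext^1(\C,X)$, and $\Ext^1(\C,X)\ne 0$ because $X\in\h_\K$ forces $X\notin\K$. This contradiction gives $\Omega X\in\K$.

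For the backward direction, assume $\Omega(\h_\K)\subseteq\K$ and take an indecomposable $X\in\B$. I choose a minimal left $\C$-approximation (available since $\C$ is functorially finite and $\B$ is Krull--Schmidt); in the $d$-cluster tilting setting this is an inflation fitting in a conflation $X\rightarrowtail C^0\twoheadrightarrow Y_1$ with $Y_1\in\h$ (immediate for $d=3$ from the further coresolution $Y_1\rightarrowtail C^1\twoheadrightarrow C^2$). Decompose $Y_1=Y'\oplus Y''$ where $Y'$ collects summands in $\h_\K$ and $Y''$ collects summands in $\h\cap\K=\C$. If $Y''=0$, then either $Y_1=0$ and $X\cong C^0\in\C\subseteq\h$, or $Y_1=Y'\in\h_\K$ and the hypothesis yields $\Omega Y_1\in\K$, which means $\uHom(\C,Y_1)=0$: every $C\to Y_1$ factors through some projective, and that projective lifts along the deflation $C^0\twoheadrightarrow Y_1$, so $\Hom(\C,C^0)\twoheadrightarrow\Hom(\C,Y_1)$ is surjective and $\Ext^1(\C,X)=0$, giving $X\in\K$.

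The remaining case $Y''\ne 0$ is excluded by a minimality/idempotent argument. Set $D_1=\ker(C^0\twoheadrightarrow Y'')$, which fits in $X\rightarrowtail D_1\twoheadrightarrow Y'$. The lifting argument above applied to $Y'$ yields $\Ext^1(\C,D_1)=0$, so $D_1\in\K$; combined with $Y''\in\C$ and the cotorsion pair, $\Ext^1(Y'',D_1)=0$, so the conflation $D_1\rightarrowtail C^0\twoheadrightarrow Y''$ splits with a section $s\colon Y''\to C^0$. The idempotent $g=s\circ(C^0\twoheadrightarrow Y'')$ kills the image of $X$ (since $X\to Y''=0$), so $e=1_{C^0}-g$ satisfies $e\circ(X\to C^0)=X\to C^0$; by minimality $e$ must be an automorphism, yet it annihilates the nonzero summand $s(Y'')\cong Y''$, which is absurd. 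Hence $Y''=0$ and the previous case applies. The hardest part of the argument will be this minimality/idempotent computation (making the endomorphism genuinely non-invertible in the exact-category setting), and one must also verify that $Y_1\in\h$ when $d\ge 4$, which requires iterating along the $d$-cluster tilting coresolution so that the relevant intermediate cokernel still admits a depth-$2$ $\C$-coresolution.
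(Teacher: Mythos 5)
Your forward direction is sound and amounts to a legitimate variant of the paper's Proposition~\ref{onlyif}: where the paper splices the two conflations for $\Omega X\in\h_\K$ into $C_1\rightarrowtail C_2\oplus P_X\twoheadrightarrow X$ and concludes $X\in\C^{\bot_1}=\K$, you reach the same contradiction by the dimension shift $\Ext^1_\B(\C,X)\cong\Ext^2_\B(\C,\Omega X)=0$. The backward direction, however, has a fatal flaw. Your case analysis is built to conclude $Y''=0$, i.e.\ that the cokernel of a minimal left $\C$-approximation of an indecomposable $X$ never has a nonzero summand in $\C$; this is false whenever $\h_\K\neq 0$. Take $X\in\h_\K$ indecomposable with its minimal coresolution $X\rightarrowtail C^0\twoheadrightarrow Y_1$: here $Y_1\in\C$ and $Y_1\neq 0$ (else $X\cong C^0\in\K$), so $Y'=0$ and $Y''=Y_1\neq 0$. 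Pushed to its conclusion, your argument would show every indecomposable object lies in $\C\cup\K=\K$, i.e.\ $\oB=0$. The step that breaks is ``the lifting argument applied to $Y'$ yields $\Ext^1(\C,D_1)=0$'': that lifting argument computes $\Ext^1(\C,-)$ of the \emph{kernel} of a conflation whose middle term already satisfies $\Ext^1(\C,-)=0$, not of the middle term $D_1$ of $X\rightarrowtail D_1\twoheadrightarrow Y'$. In the test case above $D_1=X$ and $\Ext^1_\B(\C,X)\neq 0$ precisely because $X\notin\K$. Once $D_1\in\K$ fails, the splitting of $D_1\rightarrowtail C^0\twoheadrightarrow Y''$ and the idempotent/minimality contradiction evaporate.

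What is missing is the mechanism that actually places $X$ in $\h$ when it is not in $\K$: one must show that the cosyzygy $X^1$ admits a presentation $C_2\rightarrowtail C_1\twoheadrightarrow X^1$ with $C_1,C_2\in\C$ and then pull back against $X\rightarrowtail C^0\twoheadrightarrow X^1$ to obtain $X\rightarrowtail C^0\oplus C_2\twoheadrightarrow C_1$, i.e.\ $X\in\CoCone(\C,\C)=\h$. The paper achieves this by induction on the length of the $\C$-coresolution (the filtration $\C_1\subseteq\C_2=\h\subseteq\cdots\subseteq\C_d=\B$): the hypothesis $\Omega(\h_\K)\subseteq\K$ together with minimality of projective covers rules out indecomposable summands of $X^1$ lying in $\h_\K$ (otherwise $X\cong\Omega X_1\in\K$), and Lemma~\ref{useful} rules out summands of $X^1$ in $\K_\C\cap{}^{\bot_1}\C$ of type (a) by the same comparison; only type (b) survives, which is exactly the required two-term $\C$-resolution of $X^1$. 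Your closing remark that the $d\geq 4$ case ``requires iterating along the coresolution'' is where this whole induction lives; it is the core of the proof, not a routine verification.
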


We will also figure out $\gl \oB$ when $\C$ is a $d$-cluster tilting subcategory and $\C$ is fully rigid.

We prove the following proposition, which is a general case of the "only if" part of Theorem \ref{main}.

\begin{prop}\label{onlyif}
If $(\C,\K)$ is fully rigid and $2$-rigid, then $\Omega(\h_\K)\subseteq \K$.
\end{prop}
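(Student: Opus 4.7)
The plan is to show $\Omega X\in\K=\C^{\bot_1}$ for $X\in\h_\K$ by verifying $\Ext^1_\B(C,\Omega X)=0$ for every $C\in\C$. Since $X\in\h=\CoCone(\C,\C)$, I fix a short exact sequence $X\rightarrowtail C^1\twoheadrightarrow C^2$ with $C^1,C^2\in\C$, together with $\Omega X\rightarrowtail P\twoheadrightarrow X$ where $P\in\mathcal P$. A preliminary step I would do first is to establish $\Ext^2_\B(C,X)=0$: applying $\Hom_\B(C,-)$ to $X\rightarrowtail C^1\twoheadrightarrow C^2$ yields $\Ext^1_\B(C,C^2)\to\Ext^2_\B(C,X)\to\Ext^2_\B(C,C^1)$, whose outer terms vanish by rigidity and $2$-rigidity respectively.

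For the main claim, I would take an arbitrary short exact sequence $\Omega X\rightarrowtail Y\twoheadrightarrow C$ with $C\in\C$ and show it splits. Pushing out along the inflation $\Omega X\rightarrowtail P$ gives a $3\times 3$ diagram producing two short exact sequences $P\rightarrowtail Z\twoheadrightarrow C$ and $Y\rightarrowtail Z\twoheadrightarrow X$; the former splits since $P\in\mathcal P\subseteq\K$ forces $\Ext^1_\B(C,P)=0$, so $Z\cong P\oplus C$ and we obtain $Y\rightarrowtail P\oplus C\twoheadrightarrow X$. Splicing this with $X\rightarrowtail C^1\twoheadrightarrow C^2$ yields a Yoneda $2$-extension $Y\rightarrowtail P\oplus C\to C^1\twoheadrightarrow C^2$ representing a class $\gamma\in\Ext^2_\B(C^2,Y)$. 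The long exact sequence obtained by applying $\Hom_\B(-,Y)$ to $X\rightarrowtail C^1\twoheadrightarrow C^2$ identifies $\gamma$ with the image of $[Y\rightarrowtail P\oplus C\twoheadrightarrow X]\in\Ext^1_\B(X,Y)$ under the connecting homomorphism, so showing $\gamma=0$ will imply the class of the extension lifts to $\Ext^1_\B(C^1,Y)$.

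The main obstacle is forcing this class to be zero. The $2$-rigidity hypothesis together with $\Ext^2_\B(C,X)=0$ should be leveraged, but the naive dimension-shift relating $\Ext^1_\B(C,\Omega X)$ to $\Ext^2_\B(C,X)$ is blocked by the potential non-vanishing of $\Ext^2_\B(C,P)$ for $P\in\mathcal P$ in a general exact category. My plan to overcome this is to work inside the abelian heart $\oB\simeq\mod\underline{\Omega \C}$ from Proposition \ref{eq}: the condition $\Omega X\in\K$ translates into $\overline{\Omega X}=0$ in $\oB$, which can be read off from the horseshoe short exact sequence $\Omega X\rightarrowtail \Omega C^1\twoheadrightarrow \Omega C^2$ obtained by applying the horseshoe lemma to $X\rightarrowtail C^1\twoheadrightarrow C^2$. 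Since $\underline{\Omega C^1}$ and $\underline{\Omega C^2}$ are projective objects of $\oB$, the hypothesis $X\in\h_\K$ (no $\K$-direct summands) combined with $2$-rigidity should force the induced map $\overline{\Omega C^1}\to\overline{\Omega C^2}$ in $\oB$ to be monic; by half-exactness of $\pi\colon\B\to\oB$ at the middle entry, this gives $\overline{\Omega X}=0$, hence $\Omega X\in\K$.
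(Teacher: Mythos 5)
Your proposal does not close the argument. You abandon your first line of attack yourself (``the main obstacle is forcing this class to be zero''), and the fallback via the heart rests on two steps that do not hold up. (i) The assertion that $X\in\h_\K$ together with $2$-rigidity ``should force'' $\overline{\Omega C^1}\to\overline{\Omega C^2}$ to be monic in $\oB$ is exactly the hard content and is left unproved: monicity of that map amounts to $\pd_{\oB}X\leq 1$ (compare Theorem \ref{Gro}(a)), it can fail for a fully rigid subcategory that is not $2$-rigid (Example \ref{ex1}), and in fact under the hypotheses of the proposition it is a \emph{consequence} of $\Omega X\in\K$ rather than a stepping stone towards it. (ii) Even granting monicity, the functor $\pi\colon\B\to\oB$ is only half-exact, so from the horseshoe sequence you get exactness of $\overline{\Omega X}\to\overline{\Omega C^1}\to\overline{\Omega C^2}$ at the middle term only; you may then conclude that $\overline{\Omega X}\to\overline{\Omega C^1}$ is the zero morphism, not that $\overline{\Omega X}=0$. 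Since $\Omega X\in\K$ is strictly stronger than $\pd_{\oB}X\leq 1$, the reformulation proves the wrong statement even in the best case.

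More structurally: since $\mathcal P\subseteq\C$ and $\C$ is rigid, $\Ext^1_\B(C,P)=0$, so the long exact sequence of $\Omega X\rightarrowtail P\twoheadrightarrow X$ identifies $\Ext^1_\B(C,\Omega X)$ with the cokernel of $\Hom_\B(C,P)\to\Hom_\B(C,X)$; your goal is therefore that every morphism from $\C$ to $X$ factors through a projective, and neither your computation of $\Ext^2_\B(C,X)$ nor the Yoneda $2$-extension bears on this. The paper argues instead on the object $\Omega X$ itself: for an indecomposable $X\in\h_\K$ with right minimal deflation $P_X\twoheadrightarrow X$ it first shows $\Ext^1_\B(X,P_X)=0$ (this is where $2$-rigidity enters, via $\Ext^1_\B(\Omega\C,\C)=0$ and a presentation of $X$ by objects of $\Omega\C$), whence $\Omega X$ is indecomposable by Lemma \ref{ind}; the full-rigidity dichotomy then puts $\Omega X$ in $\K$ or in $\h_\K$, and the second case is excluded because it produces a short exact sequence $C_1\rightarrowtail C_2\oplus P_X\twoheadrightarrow X$ forcing $X\in\C^{\bot_1}=\K$, contradicting $X\in\h_\K$. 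The dichotomy ``every indecomposable lies in $\K$ or in $\h$'' is the essential input, and your proposal never invokes it.
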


\begin{proof}
Let $X\in \h_{\K}$ be an indecomposable object, it admits a short exact sequence $\xymatrix{\Omega X \ar@{ >->}[r]  &P_X \ar@{->>}[r]^p  &X}$ where $P_X\in \mathcal P$ and $p$ is right minimal. If $\Ext^1_{\B}(X,P_X)\neq 0$, we have a non-split short exact sequence $\xymatrix{P_X \ar@{ >->}[r]  &Y \ar@{->>}[r]^y  &X}$. Since $X\in \h$, it also admits a short exact sequence $\xymatrix{R^1 \ar@{ >->}[r] &R^1 \ar@{->>}[r]^r  &X}$ where $R^1,R^2\in \Omega \C$ and $0\neq \overline r$ is an epimorphism in $\oB$. Since $\C$ is $2$-rigid, we have $\Ext^1_{\B}(\Omega \C,\C)=0$, hence we have the following commutative diagram
$$\xymatrix{
R^1 \ar@{ >->}[r]  \ar[d] &R^1 \ar@{->>}[r]^r \ar[d] &X \ar@{=}[d] \\
P_X \ar@{ >->}[r]  &Y \ar@{->>}[r]^y  &X.
}
$$
We get $\overline y$ is both epic and monic in $\oB$, hence $\overline y$ is an isomorphism. Since $X$ is indecomposable, $y$ is a retraction, a contradiction. Hence $\Ext^1_\B(X,P_X)=0$, by Lemma \ref{ind}, $\Omega X$ is indecomposable. Since $\C$ is fully rigid, $\Omega X$ either belongs to $\K$ or belongs to $\h$. 
If $\Omega X\in \h_\K$, it admits a short exact sequence $\xymatrix{\Omega X \ar@{ >->}[r] &C_1 \ar@{->>}[r] &C_2}$ where $C_1,C_2\in \C$. Then we have the following commutative diagram
$$\xymatrix{
\Omega X \ar@{=}[d] \ar@{ >->}[r] &C_1 \ar[d] \ar@{->>}[r] &C_2 \ar[d]\\
\Omega X \ar@{ >->}[r]^-{\alpha} &P_X \ar@{->>}[r] &X
}
$$
which induces a short exact sequence $\xymatrix{C_1\ar@{ >->}[r] &C_2\oplus P_X \ar@{->>}[r] &X}$. This implies $X\in \C^{\bot_1}=\K$, a contradiction. 
Hence $\Omega X\in \K$.
\end{proof}

We get the following corollary immediately.

\begin{cor}
If $\C$ is fully rigid and $2$-rigid, then $\gl \oB\leq 1$.
\end{cor}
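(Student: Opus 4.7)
The plan is to combine Proposition \ref{onlyif} with the projective-dimension criterion of Theorem \ref{Gro}(a). By Proposition \ref{eq}, every non-zero object of $\oB$ corresponds, under the equivalence $\oB \simeq \h/\C$, to an object of $\h$; together with the remark after Proposition \ref{eq} (stating $\B_\K = \h_\K$ in the fully rigid setting), it therefore suffices to show $\pd_{\oB}(X) \leq 1$ for each $X \in \h_\K$.

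First I would fix such an $X$. Since $X$ lies in the heart $\h$ of the cotorsion pair $(\C,\K)$, the defining property of $\h$ produces a short exact sequence
$$\xymatrix{X \ar@{ >->}[r]^x & C^1 \ar@{->>}[r] & C^2}$$
with $C^1 \in \C \cap \K \subseteq \C$ and $C^2 \in \C$; this is exactly the shape of sequence appearing in the hypothesis of Theorem \ref{Gro}(a).

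Next, I would invoke Proposition \ref{onlyif}, whose assumptions ($\C$ fully rigid and $2$-rigid) are precisely those of the corollary, to conclude that $\Omega X \in \K$. Since $\Omega X$ represents the zero object of $\oB = \B/\K$, every morphism out of $\Omega X$ in $\oB$ vanishes; in particular $\overline{\Omega x} = 0$. By Theorem \ref{Gro}(a) this forces $\pd_{\oB}(X) \leq 1$.

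Because $X \in \h_\K$ was arbitrary, $\pd_{\oB}(X) \leq 1$ for every object of $\oB$, whence $\gl \oB \leq 1$. There is no real obstacle here: both nontrivial ingredients (Proposition \ref{onlyif} and Theorem \ref{Gro}(a)) have already been established, so the argument is essentially a one-line assembly.
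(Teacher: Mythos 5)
Your proof is correct and follows essentially the same route as the paper: both arguments hinge on Proposition \ref{onlyif} giving $\Omega(\h_\K)\subseteq\K$, and both then conclude $\pd_{\oB}X\leq 1$ from the resulting length-one resolution $0\to\Omega C^1\to\Omega C^2\to X\to 0$ by projectives of $\oB$. The only cosmetic difference is that you reach this via the criterion of Theorem \ref{Gro}(a) (noting $\overline{\Omega x}=0$ because $\Omega X$ is zero in $\oB$), whereas the paper writes down the short exact sequence directly; the underlying mechanism is identical.
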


\begin{proof}
According to Proposition \ref{onlyif}, $\Omega (\h_\K)\subseteq \K$. Then every object $X\in \h_\K$ admits a short exact sequence $0 \to R_2\to R_1 \to X\to 0$ where $R_2, R_1\in \Omega \C$ in $\oB$. Hence $\gl \oB\leq 1$.
\end{proof}

Let ${^{\bot_1}}\C=\{X\in \B \text{ }|\text{ }\Ext^1_\B(X,\C)=0 \}$, the following lemma is useful in the prove of Theorem \ref{main}.

\begin{lem}\label{useful}
Let $\C$ be fully rigid, $K\in \K_\C\cap {^{\bot_1}}\C$ be an indecomposable object, then $\K$ satisfies one of the following conditions:
\begin{itemize}
\item[(a)] $K$ admits a short exact sequence $\xymatrix{K' \ar@{ >->}[r] &P \ar@{->>}[r]^p &K }$ where $P\in \mathcal P$, $K'\in \K$ and $p$ is right minimal;
\item[(b)] $K$ admits a short exact sequence $\xymatrix{C_2 \ar@{ >->}[r] &C_1 \ar@{->>}[r] &K}$ where $C_1,C_2\in \C$.
\end{itemize}

\end{lem}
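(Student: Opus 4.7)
The plan is to extract the syzygy of $K$ via the minimal projective cover and let the fully rigid dichotomy decide between the two alternatives. First I would take a minimal right $\mathcal{P}$-approximation $p\colon P \twoheadrightarrow K$ with kernel $K'$, giving a short exact sequence $K' \rightarrowtail P \twoheadrightarrow K$ with $p$ right minimal. Since $\mathcal{P} \subseteq \C$ and $K \in {^{\bot_1}}\C$, we have $\Ext^1_{\B}(K,P) = 0$, so Lemma~\ref{ind}(a) guarantees that $K'$ is indecomposable. Moreover $K'$ cannot be zero, for then $K \cong P$ would lie in $\C$, contradicting $K \in \K_{\C}$.

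Because $\C$ is fully rigid, the indecomposable object $K'$ must belong either to $\K$ or to $\h$. In the first case we are immediately in situation (a) with this $K'$ and right-minimal $p$. In the second case, using $\h = \CoCone(\C,\C)$, I would choose a short exact sequence $K' \rightarrowtail C \twoheadrightarrow C'$ with $C, C' \in \C$ and then push the syzygy sequence out along the inflation $K' \rightarrowtail C$ to produce the $3 \times 3$ diagram
\[
\xymatrix{
K' \ar@{ >->}[r] \ar@{ >->}[d] & P \ar@{->>}[r] \ar@{ >->}[d] & K \ar@{=}[d]\\
C \ar@{ >->}[r] \ar@{->>}[d] & Q \ar@{->>}[r] \ar@{->>}[d] & K\\
C' \ar@{=}[r] & C'.
}
\]
The middle column $P \rightarrowtail Q \twoheadrightarrow C'$ has both ends in $\C$, and $\C$ is extension-closed as one half of a cotorsion pair, so $Q \in \C$. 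The middle row $C \rightarrowtail Q \twoheadrightarrow K$ is then an instance of (b) with $C_2 = C$ and $C_1 = Q$.

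I do not anticipate a serious obstacle here. The two hypotheses on $K$ play very specific and modest roles: $K \in {^{\bot_1}}\C$ is used only to activate Lemma~\ref{ind}(a), while $K \in \K_{\C}$ is used only to rule out the degenerate case $K' = 0$. The pushout construction is a standard consequence of the exact-category axioms, and the final bookkeeping of $Q \in \C$ uses only extension-closure of $\C$. So the entire argument reduces to a disciplined use of the fully rigid dichotomy applied to the single syzygy $K'$ of $K$.
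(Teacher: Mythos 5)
Your proposal is correct and follows essentially the same route as the paper: minimal projective presentation of $K$, Lemma~\ref{ind}(a) via $K\in{}^{\bot_1}\C$ to get the syzygy indecomposable, the fully rigid dichotomy on that syzygy, and in the $\h$-case a pushout of the two short exact sequences out of $K'$. The only (cosmetic) difference is in packaging case (b): you keep the pushout object $Q$ and invoke extension-closure of $\C$, while the paper reads off the bicartesian square to get $C^1\rightarrowtail C^2\oplus P\twoheadrightarrow K$ directly; both yield the required sequence.
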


\begin{proof}
$K$ always admits a short exact sequence $\xymatrix{X \ar@{ >->}[r] &P \ar@{->>}[r]^p &K }$ where $P\in \mathcal P$, $p$ is right minimal. By Lemma \ref{ind}, $X$ is indecomposable. Since $\C$ is fully rigid, we have $X\in \K$ or $X\in \h$.\\
If $X\in \K$, then (a) holds.\\
If $X\in \h_\K$, then $X$ admits a short exact sequence $\xymatrix{X \ar@{ >->}[r] &C^1 \ar@{->>}[r] &C^2 }$ where $C^1,C^2\in \C$. Then we have the following commutative diagram
$$\xymatrix{
X \ar@{ >->}[r] \ar@{=}[d] &C^1 \ar[d] \ar@{->>}[r] &C^2 \ar[d] \\
X \ar@{ >->}[r] &P \ar@{->>}[r]^p &K
}
$$
which induces a short exact sequence $\xymatrix{C^1 \ar@{ >->}[r] &C^2\oplus P \ar@{->>}[r] &K}$, hence (b) holds.
\end{proof}

Now let $\C$ be a $d$-cluster tilting subcategory, $d\geq 3$. Let $\C_1=\C$ and $\C_l$ ($2\leq l\leq d$) be the subcategory consisting of object $X$ admitting the following long exact sequence
$$\xymatrix@C=0.5cm@R0.4cm{
X \ar@{ >->}[r]^x &C^1 \ar@{->>}[dr] \ar[rr] &&C^2 \ar[r] &\cdot\cdot\cdot \ar[r] &C^{l-2} \ar@{->>}[dr] \ar[rr] &&C^{l-1} \ar@{->>}[r] &C^l\\
&&X^1 \ar@{ >->}[ur] &&&&X^l \ar@{ >->}[ur]
}
$$
where $C^i\in \C$, $x$ is a left $\C$-approximation and $X^1\in {^{\bot_1}}\C$. We have $\C_1\subseteq \C_2=\h\subseteq \cdot\cdot\cdot \subseteq \C_{d-1}\subseteq \C_{d}=\B$. Moreover, by the dual of \cite[Proposition 5.10]{LN}, $\C_l$ is closed under direct summands.\\

Now we show Theorem \ref{main}.

\begin{proof}
By Proposition \ref{onlyif}, we only need to show the "if" part of the theorem.

If $\Omega (\h_{\K})\subseteq \K$, we show that any indecomposable object $X\notin \K$ belongs to $\h_\K$.\\
If $X\in (\C_3)_\K$, then $X$ admits the a short exact sequence $\xymatrix{ X \ar@{ >->}[r] &C^1 \ar@{->>}[r] &X^1}$
where $C^1\in \C$ and $X^1\in \h$. If $X^1=X_1\oplus X_2$ such that $X_1\in \h_\K$ is non-zero, then we have the following commutative diagram:
$$\xymatrix{
\Omega X_1 \ar@{ >->}[r] \ar[d]_{x_1} &P \ar@{->>}[r]^p \ar[d]^{p_1} &X_1 \ar[d]^{\svecv{1}{0}}\\
X \ar@{ >->}[r] \ar[d]_{x_2} &C^1 \ar@{->>}[r] \ar[d]^{p_2} &X_1\oplus X_2 \ar[d]^{\svech{1}{0}}\\
\Omega X_1 \ar@{ >->}[r] &P \ar@{->>}[r]^p  &X_1
}
$$
where $P\in \mathcal P$ and $p$ is right minimal, hence $p_2p_1$ is an isomorphism and then $x_2x_1$ is also an isomorphism. Since $X$ is indecomposable, we get $X\simeq \Omega X_1$. But $\Omega X_1\in \K$, a contradiction. Hence $X^1\in \C$ and $X\in \h_\K$.\\
Assume we have shown that the statement holds for all objects in $(\C_l)_\K$. Let $X\in (\C_{l+1})_\K$, $X$ admits a short exact sequence $\xymatrix{ X \ar@{ >->}[r]^c &C^1 \ar@{->>}[r] &X^1}$ where $c$ is a left minimal $\C$-approximation and $X^1\in \C_l$. 
If $X^1$ has an indecomposable direct summand $X_1\notin \K$, then by the hypothesis, $X_1\in \h_\K$. Since $\Omega (\h_\K)\subseteq \K$, we have the following commutative digram
$$\xymatrix{
\Omega X_1 \ar@{ >->}[r] \ar[d]_{x_{11}} &P^1 \ar@{->>}[r]^{p^1} \ar[d]^{p_{11}} &X_1 \ar[d]^{\alpha}\\
X \ar@{ >->}[r] \ar[d]_{x_{21}} &C^1 \ar@{->>}[r] \ar[d]^{p_{21}} &X^1 \ar[d]^{\beta}\\
\Omega X_1 \ar@{ >->}[r] &P^1 \ar@{->>}[r]^{p^i}  &X_1
}
$$
where $P^1\in \mathcal P$, $p^1$ is minimal and $\beta\alpha=1_{X_1}$. We get $ X\simeq \Omega X_1\in \K$, a contradiction. 
Hence $X^1\in \K$. Since $X^1\in {^{\bot_1}}\C$, if $X^1$ has an indecomposable direct summand $X_2$ satisfying condition (a) in Lemma \ref{useful}, we have the following commutative diagram
$$\xymatrix{
\Omega X_2 \ar@{ >->}[r] \ar[d]_{x_{12}} &P^2 \ar@{->>}[r]^{p^1} \ar[d]^{p_{12}} &X_2 \ar[d]^{\alpha'}\\
X \ar@{ >->}[r] \ar[d]_{x_{22}} &C^1 \ar@{->>}[r] \ar[d]^{p_{22}} &X^1 \ar[d]^{\beta'}\\
\Omega X_2 \ar@{ >->}[r] &P^2 \ar@{->>}[r]^{p^i}  &X_2
}
$$
where $P^2\in \mathcal P$, $p^2$ is minimal and $\beta'\alpha'=1_{X_2}$. We get $ X\simeq \Omega \X_2\in \K$, a contradiction. Hence $X^1$ admits a short exact sequence $\xymatrix{C_2\ar@{ >->}[r] &C_1\ar@{->>}[r] &X^1}$ where $C_1,C_2\in \C$. Then we have the following commutative diagram
$$\xymatrix{
X \ar@{ >->}[r] \ar[d] &C^1 \ar@{->>}[r] \ar[d] &X^1 \ar@{=}[d]\\
C_2\ar@{ >->}[r] &C_1\ar@{->>}[r] &X^1
}
$$
which induces a short exact sequence $X\rightarrowtail C^1\oplus C_2 \twoheadrightarrow C_1$. Hence $X\in \h_\K$.

\end{proof}

\section{global dimension of $\oB$}

According to \cite[Theorem 5.2]{B}, the global dimension of the abelian quotient category induced by a fully rigid subcategory is either $1$ or infinity. This result is not always true on exact category (Example \ref{ex1}). In this section we investigate the global dimension of $\oB$.

\begin{lem}\label{syz}
Let $(\C,\K)$ be fully rigid, for an object $X\in \h_\K$, if $\pd_{\B} X=1$, then $\pd_{\oB} X\leq 1$.
\end{lem}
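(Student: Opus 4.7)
My plan is to reduce the statement directly to the criterion of Theorem \ref{Gro}(a). Since $X \in \h_\K$ and $\h = \CoCone(\C,\C)$ (because $\C$ is rigid), the object $X$ fits into a short exact sequence $\xymatrix{X \ar@{ >->}[r]^x & C^1 \ar@{->>}[r] & C^2}$ with $C^1, C^2 \in \C$. By Theorem \ref{Gro}(a), it therefore suffices to verify that $\overline{\Omega x} = 0$ in $\oB$.

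The first step will be the elementary observation that $\mathcal P \subseteq \K$. Rigidity of $\C$ gives $\Ext^1_\B(\C,\C) = 0$, so $\C \subseteq \C^{\bot_1} = \K$; combined with $\mathcal P \subseteq \C$ this yields $\mathcal P \subseteq \K$.

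Next I will use the hypothesis $\pd_\B X = 1$. For any projective presentation $\xymatrix{\Omega X \ar@{ >->}[r] & P_X \ar@{->>}[r]^p & X}$ chosen in the definition of $\Omega$, Schanuel's lemma (valid in our exact setting with enough projectives) forces the syzygy $\Omega X$ to be projective. Hence $\Omega X \in \mathcal P \subseteq \K$, and therefore $\overline{\Omega X} = 0$ in $\oB$; in particular every morphism out of $\Omega X$ vanishes in $\oB$, so $\overline{\Omega x} = 0$. Applying Theorem \ref{Gro}(a) then gives $\pd_{\oB} X \leq 1$.

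There is no real obstacle in this argument: the only point worth care is the inclusion $\mathcal P \subseteq \K$, which is immediate from rigidity of $\C$. The rest is a direct combination of Schanuel's lemma and the Gorenstein criterion established in Theorem \ref{Gro}.
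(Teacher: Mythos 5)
Your proof is correct and is essentially the paper's own argument: the paper likewise observes that when $\pd_{\B}X=1$ the syzygy term in the four-term sequence $\Omega X\to \Omega C^1\to \Omega C^2\to X\to 0$ coming from the diagram of Theorem \ref{Gro} is projective, hence lies in $\mathcal P\subseteq \C\subseteq \K$ and vanishes in $\oB$, yielding the resolution $0\to \Omega C^1\to \Omega C^2\to X\to 0$. The only cosmetic differences are that you invoke Theorem \ref{Gro}(a) as a black box (harmless, since the ``if'' direction of that statement never uses the extra cotorsion pair $(\K,\D)$ assumed in Section 3) and use Schanuel's lemma to see that $\Omega X$ is projective, where the paper simply starts from a length-one projective resolution of $X$.
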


\begin{proof}
Since $\pd_{\B} X=1$, it admits the following commutative diagram.
$$\xymatrix{
&\Omega C^1 \ar@{=}[r] \ar@{ >->}[d] &\Omega C^1 \ar@{=}[r] \ar@{ >->}[d]^b &\Omega C^1\ar@{ >->}[d]\\
P_1 \ar@{ >->}[r] \ar@{=}[d] &P_0\oplus \Omega C^1 \ar@{->>}[r] \ar@{->>}[d] &\Omega C^2 \ar@{ >->}[r] \ar@{->>}[d]^c &P\ar@{->>}[r] \ar@{->>}[d] &C^2 \ar@{=}[d] \\
P_1 \ar@{ >->}[r]^p  &P_0 \ar@{->>}[r]^q  &X \ar@{ >->}[r]  &C^1  \ar@{->>}[r] &C^2
}$$
where $C^1,C^2,\in \C$, $P,P_0,P_1\in \mathcal P$. Hence $X$ admits a short exact sequence $0\to \Omega C^1 \to \Omega C^2 \to X\to 0$ in $\oB$, which means $\pd_{\oB}X\leq 1$.
\end{proof}

\begin{prop}\label{nsyz}
Let $(\C,\K)$ be fully rigid, $\gl \B=n$.
\begin{itemize}
\item[(a)] If $\Omega^2\C\subseteq \K$, then $\gl \oB\leq 2n-1$;
\item[(b)] If $n\geq 3$ and $\Omega^3\C\subseteq \K$, then $\gl \oB\leq 3n-1$.
\end{itemize}
\end{prop}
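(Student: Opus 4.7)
The plan is to induct on $k = \pd_{\B}(X)$ for $X \in \h_\K$, showing $\pd_{\oB}(X) \leq 2k - 1$ in (a) and $\pd_{\oB}(X) \leq 3k - 1$ in (b). The base case $k \leq 1$ follows from Lemma \ref{syz}: for $k = 0$ the object $X$ lies in $\mathcal P \subseteq \K$ and hence vanishes in $\oB$, while $k = 1$ yields $\pd_{\oB}(X) \leq 1$ directly.

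For the inductive step of (a), the aim is to upgrade the 4-term exact sequence $\pi(\Omega X) \to \Omega C^1 \to \Omega C^2 \to X \to 0$ of Theorem \ref{Gro} to an exact sequence with a monomorphism on the left. Starting from the short exact sequence $\Omega X \rightarrowtail P_X \oplus \Omega C^1 \twoheadrightarrow \Omega C^2$ in $\B$ supplied by the diagram $(\maltese)$, I would form the pullback with a projective cover $\Omega^2 C^2 \rightarrowtail P \twoheadrightarrow \Omega C^2$. Since $P$ is projective, the projective component splits off and one obtains a short exact sequence
$$\Omega^2 C^2 \rightarrowtail \Omega X \oplus P \twoheadrightarrow P_X \oplus \Omega C^1 \qquad \text{in } \B.$$
Applying the half-exact functor $\pi\colon \B \to \oB$ and invoking $\Omega^2\C \subseteq \K$ to give $\pi(\Omega^2 C^2) = 0$, exactness at the middle term forces $\pi(\Omega X) \to \Omega C^1$ to be monic in $\oB$. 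Combined with the 4-term exact sequence this yields an exact sequence $0 \to \pi(\Omega X) \to \Omega C^1 \to \Omega C^2 \to X \to 0$ in $\oB$.

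The fully rigid decomposition $\Omega X = Y_1 \oplus Y_2$ with $Y_1 \in \K$, $Y_2 \in \h_\K$ and $\pd_{\B}(Y_2) \leq k - 1$ gives $\pi(\Omega X) = \pi(Y_2)$. By induction, $\pd_{\oB}(\pi(Y_2)) \leq 2(k-1) - 1 = 2k - 3$. Splitting the exact sequence above into $0 \to \pi(Y_2) \to \Omega C^1 \to I \to 0$ and $0 \to I \to \Omega C^2 \to X \to 0$, with $\Omega C^1, \Omega C^2$ projective in $\oB$, yields $\pd_{\oB}(X) \leq \pd_{\oB}(\pi(Y_2)) + 2 \leq 2k - 1$. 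Taking $k = n$ proves (a).

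For (b), the same scheme is iterated once more: under only $\Omega^3\C \subseteq \K$, the term $\pi(\Omega^2 C^2)$ need not vanish, but applying the 4-term exact sequence to the $\h_\K$-summand of $\Omega^2 C^2$ together with $\Omega^3 C^2 \in \K$ shows that this contribution has projective dimension at most $1$ in $\oB$. Performing a second pullback at this extra layer produces an exact sequence in $\oB$ with three projective terms between $\pi(\Omega X)$ and $X$, and the same splicing argument then gives $\pd_{\oB}(X) \leq \pd_{\oB}(\pi(Y_2)) + 3 \leq 3(k-1) - 1 + 3 = 3k - 1$. The main obstacle is constructing this second pullback carefully and verifying exactness after applying $\pi$; the hypothesis $n \geq 3$ ensures that $\Omega^2\C$ is generically nonzero in $\oB$ so that this additional layer is genuinely required.
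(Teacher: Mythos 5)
Your part (a) is correct and is essentially the paper's own argument: the paper likewise descends through the syzygies of $X$ using the four-term sequence $\pi(\Omega X)\to\Omega C^1\to\Omega C^2\to X\to 0$ coming from $({\maltese})$, gaining $2$ in projective dimension at each step. Your pullback of $\Omega^2C^2\rightarrowtail P\twoheadrightarrow\Omega C^2$ against $P_X\oplus\Omega C^1\twoheadrightarrow\Omega C^2$ makes explicit the left-exactness that the paper only asserts, and phrasing the descent as induction on $\pd_{\B}$ with the splitting $\Omega X=Y_1\oplus Y_2$ is a cosmetic reorganization of the same idea.

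Part (b) has a genuine gap. You claim the iterated pullback yields an exact sequence $0\to\pi(\Omega X)\to Q_2\to Q_1\to Q_0\to X\to 0$ with three projective terms $Q_i$, whence $\pd_{\oB}(X)\le\pd_{\oB}(\pi(\Omega X))+3$. No such sequence comes out of the construction: under $\Omega^3\C\subseteq\K$ alone, the map $\pi(\Omega X)\to\Omega C^1$ is \emph{not} monic in $\oB$ --- its kernel is the image of $\pi(\Omega^2C^2)$, which is nonzero precisely when $\Omega^2C^2\notin\K$, i.e.\ in the situation (b) is meant to handle. What the iterated pullbacks actually produce is the longer exact sequence
$$0\to\pi(\Omega^2X)\to\pi(\Omega^2C^1)\to\pi(\Omega^2C^2)\to\pi(\Omega X)\to\Omega C^1\to\Omega C^2\to X\to 0,$$
where the leftmost zero uses $\pi(\Omega^3C^2)=0$; the two new interior terms are not projective in $\oB$ but only of projective dimension at most $1$ (this is your correct observation that $\pd_{\oB}(\Omega^2C)\le 1$ for $C\in\C$), and the extra term $\pi(\Omega^2X)$ enters on the left. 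Consequently the recursion is genuinely two-step: chopping this sequence into short exact pieces gives an estimate of the shape $\pd_{\oB}(X)\le 2+\max\{\pd_{\oB}(\pi(\Omega X)),\,3+\pd_{\oB}(\pi(\Omega^2X))\}$, so both $\pi(\Omega X)$ and $\pi(\Omega^2X)$ must be fed into the induction. The conclusion survives (the recursion $f(k)\le 2+\max\{f(k-1),3+f(k-2)\}$ with $f(1)\le 1$ stays below $3k-1$), and this is also the shape of the paper's own argument, which splices a six-term sequence with two $\Omega^2$-terms of projective dimension at most $1$ at the left end; but the inequality $\pd_{\oB}(X)\le\pd_{\oB}(\pi(\Omega X))+3$ on which your part (b) rests is not established.
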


\begin{proof}
We have $n>1$, since if $n=1$, we get $\Omega \C=\mathcal P$, then by Proposition \ref{eq}, $\oB=0$. Hence $\K=\B$ and $\C=\mathcal P$, a contradiction to the definition of fully rigid subcategory.\\
Let $X\in \h$, $X$ admits the following commutative diagram.
$$\xymatrix{
&\Omega C^1 \ar@{=}[r] \ar@{ >->}[d] &\Omega C^1 \ar@{=}[r] \ar@{ >->}[d]^b &\Omega C^1\ar@{ >->}[d]\\
\Omega X \ar@{ >->}[r]^-{\svecv{a}{b}} \ar@{=}[d] &P_X\oplus \Omega C^1 \ar@{->>}[r] \ar@{->>}[d] &\Omega C^2 \ar@{ >->}[r] \ar@{->>}[d]^c &P\ar@{->>}[r] \ar@{->>}[d] &C^2 \ar@{=}[d] \\
\Omega X\ar@{ >->}[r]^p  &P_X \ar@{->>}[r]^q  &X \ar@{ >->}[r] &C^1 \ar@{->>}[r] &C^2
}$$
where $C^1,C^2\in \C$, $P_X,P\in \mathcal P$.\\
(a) If $\Omega^2\C\subseteq \K$, for any object $\Omega^{n-2}X\in \h_\K$, we have the following long exact sequence in $\oB$
$$0\to \Omega^{n-1}X \to \Omega C' \to \Omega C \to \Omega^{n-2} X \to 0 $$
where $C',C\in \C$. This implies $\pd_{\oB}(\Omega^{n-2} X)\leq 3$. In general, we have the following long exact sequence in $\oB$
$$0\to \Omega^l X \to \Omega C_l' \to \Omega C_l \to \Omega^{l-1} X \to 0 $$
where $C_l',C_l\in \C$ and $\pd_{\oB}(\Omega^l X)\leq 2(n-l)-1$, we get $\pd_{\oB}(\Omega^{l-1} X)\leq 2(n-l)+1$. At last we get $\pd_{\oB}X\leq 2n-1$. Hence $\gl \oB\leq 2n-1$.\\
(b) If $\Omega^3\C\subseteq \K$, for any object $C_0\in \C$, we have the following long exact sequence in $\oB$
$$0\to \Omega C' \to \Omega C \to \Omega^2 C_0 \to 0$$
where $C',C\in \C$, hence $\pd_{\oB}(\Omega^2 C_0)\leq 1$. Now for any object $\Omega^{n-2} X\in \h_\K$, we have the following long exact sequence in $\oB$
$$0\to \Omega^2 C_{n-2}' \to \Omega^2 C_{n-2} \to \Omega^{n-1}X \to \Omega C_{n-2}' \to \Omega C_{n-2} \to \Omega^{n-2} X \to 0 $$
where $C_{n-2}',C_{n-2}\in \C$. Hence $\pd_{\oB}(\Omega^{n-2} X)\leq 5$. In general, we have the following long exact sequence in $\oB$
$$0\to \Omega^2 C_{l-1}' \to \Omega^2 C_{l-1} \to \Omega^l X \to \Omega C_{l-1}' \to \Omega C_{l-1} \to \Omega^{l-1} X \to 0 $$
where $C_{l-1}',C_{l-1}\in \C$ and $\pd_{\oB}(\Omega^l X)\leq 3(n-l)-1$, then $\pd_{\oB}(\Omega^{l-1} X)\leq 3(n-l)+2$. At last we can get $\pd_{\oB}X\leq 3n-1$.
\end{proof}

We have the following corollary immediately.

\begin{cor}
Let $(\C,\K)$ be fully rigid.
\begin{itemize}
\item[(1)] If $\gl \B= 2$, then $\gl \oB\leq 3$;
\item[(2)] If $\gl \B= 3$, then $\gl \oB\leq 8$.
\end{itemize}
\end{cor}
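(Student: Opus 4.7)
The plan is to reduce both statements directly to Proposition \ref{nsyz} by verifying its hypothesis in each case. The key observation is that if $\gl\B = n$, then every object $B \in \B$ has projective dimension at most $n$, so the $n$-th syzygy $\Omega^n B$ lies in $\mathcal P$. In particular, for any $C \in \C$ we have $\Omega^n \C \subseteq \mathcal P \subseteq \K$, using that $\mathcal P \subseteq \K$ (which follows from $\Ext^1_\B(\C,\K) = 0$ and the fact that projectives lie in both sides of any cotorsion pair).

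For part (1), I take $n = 2$. Since $\gl\B = 2$, every object of $\B$ has projective dimension at most $2$, so $\Omega^2 C \in \mathcal P \subseteq \K$ for every $C \in \C$, i.e.\ $\Omega^2 \C \subseteq \K$. Thus the hypothesis of Proposition \ref{nsyz}(a) is satisfied, and we conclude $\gl \oB \leq 2 \cdot 2 - 1 = 3$.

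For part (2), I take $n = 3$. The hypothesis $n \geq 3$ of Proposition \ref{nsyz}(b) is met, and by the same syzygy argument $\Omega^3 \C \subseteq \mathcal P \subseteq \K$. Applying Proposition \ref{nsyz}(b) yields $\gl \oB \leq 3 \cdot 3 - 1 = 8$.

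There is essentially no obstacle: the whole content is packaged in Proposition \ref{nsyz}, and the corollary is just instantiating $n = 2, 3$ after noting $\Omega^n \C \subseteq \mathcal P \subseteq \K$. The only point worth double-checking is the inclusion $\mathcal P \subseteq \K$, which holds because $(\C,\K)$ is a cotorsion pair and $\mathcal P \subseteq \C^{\bot_1} = \K$ as recorded in the remark following the definition of cotorsion pair.
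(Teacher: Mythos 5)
Your proof is correct and is exactly the argument the paper intends: the corollary is stated as an immediate consequence of Proposition \ref{nsyz}, obtained by observing that $\gl \B = n$ forces $\Omega^n \C \subseteq \mathcal P \subseteq \K$ (via Schanuel, any $n$-th syzygy of an object of projective dimension $\leq n$ is projective) and then instantiating $n=2,3$. One small correction: your justification of $\mathcal P \subseteq \K$ is off --- projectives do \emph{not} lie on the right-hand side of an arbitrary cotorsion pair (the paper's remark only records $\mathcal P \subseteq \U$ and $\mathcal I \subseteq \V$); here the inclusion holds because $\C$ is rigid and $\mathcal P \subseteq \C$, so $\mathcal P \subseteq \C \subseteq \C^{\bot_1} = \K$.
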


When $\gl \B\geq 4$, we have the following proposition.

\begin{prop}
If $\gl \B=n,n\geq 4$, then $$\gl \oB< \infty \text{ if and only if }\sup\{\pd_{\oB}(\Omega^{i}C) \text{ }|\text{ } 2\leq i\leq n-2, \text{ } \forall C\in \C \}< \infty.$$
Moreover, if we know that $\sup\{\pd_{\oB}(\Omega^{i}C) \text{ }|\text{ } 2\leq i\leq n-2, \text{ } \forall C\in \C \}=m$, then $\gl \oB\leq m+3n-2$.
\end{prop}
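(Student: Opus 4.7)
The forward direction is immediate: if $\gl\oB = g < \infty$, then $\pd_{\oB}(\Omega^i C) \le g$ for every $i$ and every $C \in \C$, so the supremum is at most $g$.

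For the converse, set $m := \sup\{\pd_{\oB}(\Omega^{i}C) : 2 \le i \le n-2,\; C \in \C\}$ and fix $X \in \h_{\K}$; the aim is to show $\pd_{\oB}(X) \le m + 3n - 2$. The plan is to adapt the iterative syzygy argument of Proposition \ref{nsyz}(b), replacing the vanishing hypothesis ``$\Omega^{3}\C \subseteq \K$'' there by the quantitative bound $m$. Applying the commutative diagram from the proof of Proposition \ref{nsyz} to $X$ produces, in $\oB$, a four-term exact sequence
$$\Omega X \to \Omega C^{1}_{0} \to \Omega C^{2}_{0} \to X \to 0$$
with $C^{1}_{0}, C^{2}_{0} \in \C$. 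The crucial observation is that $\Omega C \in \CoCone(\mathcal P, \C) \subseteq \h$ for every $C \in \C$, so Proposition \ref{nsyz}'s construction can be reapplied iteratively, producing deeper four-term pieces coupling $\Omega^{i+1}C$ to $\Omega^{i}C$ through pairs of projectives of the form $\Omega C'$. Pursuing this down to depth $n - 2$, and on the $X$-side down to $\Omega^{n-1}X$, yields a long exact sequence in $\oB$ whose terms are iterated syzygies $\Omega^{i}D$ of $\C$-objects at depths $1 \le i \le n-2$ together with the tail $\Omega^{n-1}X$.

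Next I classify the terms: $\Omega D$ (depth $1$) is projective in $\oB$; $\Omega^{i}D$ (depth $2 \le i \le n-2$) has $\pd_{\oB} \le m$ by hypothesis; and the tail $\Omega^{n-1}X$ satisfies $\pd_{\B}(\Omega^{n-1}X) \le 1$ (since $\gl\B = n$), hence $\pd_{\oB}(\Omega^{n-1}X) \le 1$ by Lemma \ref{syz} (or $\Omega^{n-1}X \in \K$ and vanishes in $\oB$). Splicing the long exact sequence into short exact sequences and applying the standard inequality $\pd C \le \max(\pd B, \pd A + 1)$ for $0 \to A \to B \to C \to 0$, each projective $\Omega D$-step contributes at most $1$ to the running bound, each $\Omega^{i}D$-step ($i \ge 2$) at most $m+1$, and the tail at most $1$. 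Arranging the splicing so that the deepest $\Omega^{i}D$-term's pd enters the final bound only once, telescoping through the roughly $3(n-1)$ splicing steps yields $\pd_{\oB}(X) \le m + 3n - 2$.

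The main obstacle is this last bookkeeping step: ensuring that the contributions from the various $\Omega^{i}D$-terms combine into a single additive $m$ rather than $(n-3)m$. This is essentially a question of ordering the splicing so as to absorb the deeper terms into length increments while letting only the deepest $\Omega^{i}D$-term's pd appear in the final maximum. A secondary obstacle is justifying that Proposition \ref{nsyz}'s diagram can indeed be reapplied at each iteration; this should follow from $\Omega\C \subseteq \h$ together with the fully rigid dichotomy, which controls the decomposition of each intermediate syzygy into its $\K$-part (vanishing in $\oB$) and its $\h$-part.
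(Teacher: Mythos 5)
Your proposal follows the paper's own proof essentially step for step: iterate the four-term syzygy sequences from the diagram of Proposition \ref{nsyz}, observe that the depth-$1$ syzygies $\Omega D$ are projective in $\oB$, that $\Omega^{i}D$ for $2\le i\le n-2$ has $\pd_{\oB}\le m$ directly by hypothesis, and that the tail $\Omega^{n-1}X$ has $\pd_{\oB}\le 1$ via $\gl\B=n$ and Lemma \ref{syz}, then read the bound off the resulting long exact sequence. The ``main obstacle'' you flag is not actually one: since each $\Omega^{i}D$ with $i\ge 2$ is bounded by $m$ from the hypothesis rather than recursively, the standard estimate $\pd_{\oB}(Z)\le\max_{i}\{\pd_{\oB}(A_{i})+i-1\}$ for a long exact sequence $0\to A_{k}\to\cdots\to A_{1}\to Z\to 0$ picks up a single additive $m$ from the deepest term (a maximum, not a sum), which is exactly the paper's computation $\pd_{\oB}(\Omega^{n-l}X)\le m+3l-2$.
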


\begin{proof}

The "only if" part is trivial. We check the "if" part.\\
$X$ admits the following long exact sequence in $\oB$:
$$0\to \Omega^{n-1}X \to \Omega^{n-1}C^1 \to \Omega^{n-1}C^2 \to \Omega^{n-2}X \to \cdot\cdot\cdot \to \Omega X \to \Omega C^1 \to \Omega C^2 \to X \to 0$$
where $C^1,C^2\in \C$. 
$\Omega^{n-2} X$ admits the following long exact sequence in $\oB$:
$$0\to \Omega^2C' \to \Omega^2C \to \Omega^{n-1}X \to \Omega C' \to \Omega C \to \Omega^{n-2} X \to 0  $$
where $C',C\in \C$. \\
Let $\sup\{\pd_{\oB}(\Omega^{i}C) \text{ }|\text{ } 2\leq i\leq n-2, \text{ } \forall C\in \C \}=m$, the case $m=0$ has been discussed in Proposition \ref{nsyz}, we assume $m>0$. Then $\pd_{\oB}(\Omega^2 C'),\pd_{\oB}(\Omega^2 C)\leq m$ and $\pd_{\oB}(\Omega^{n-1} X)\leq 1$, we get $\pd_{\oB}(\Omega^{n-2} X)\leq  m+4$. In general, $\Omega^{n-l} X$ ($2\leq l\leq n-1$) admits the following long exact sequence in $\oB$:
$$0\to \Omega^{l-1}C''' \to \Omega^{l-1}C'' \to \Omega^{n-1}X \to \cdot\cdot\cdot \to \Omega C''' \to \Omega C'' \to \Omega^{n-l} X \to 0  $$
where $C'',C'''\in \C$. Then $\pd_{\oB}(\Omega^{n-l} X)\leq m+3l-2$. At last we can get 
$\pd_{\oB}X\leq m+3n-2$.
\end{proof}

\end{document}